\theoremstyle{plain}
\newtheorem{thm}{Theorem}
\newtheorem{cor}[thm]{Corollary}
\newtheorem{lem}[thm]{Lemma}
\newtheorem{prop}[thm]{Proposition}
\theoremstyle{definition}
\theoremstyle{remark}
\numberwithin{thm}{section}
\numberwithin{equation}{section}
\def\be{\begin{equation}}
\def\ee{\end{equation}}
\def\bse{\begin{subequations}}
\def\ese{\end{subequations}}
\def\bge{\begin{eqnarray}}
\def\bgee{\begin{eqnarray*}}
\def\ege{\end{eqnarray}}
\def\egee{\end{eqnarray*}}
\def\R{{\Bbb R}}
\def\eps{\varepsilon}
\def \trait (#1) (#2) (#3){\vrule width #1pt height #2pt depth #3pt}
\def \fin{\hfill
        \trait (0.1) (5) (0)
        \trait (5) (0.1) (0)
        \kern-5pt
        \trait (5) (5) (-4.9)
        \trait (0.1) (5) (0)
\medskip}
\def \mfin{\trait (0.1) (5) (0)
        \trait (5) (0.1) (0)
        \kern-5pt
        \trait (5) (5) (-4.9)
        \trait (0.1) (5) (0)
}
\title{Grow-up rate and refined asymptotics for a two-dimensional Patlak-Keller-Segel 
model in a disk}
\author{Nikos I. Kavallaris}
\address{
\begin{flushleft}
        \hspace{0.3cm}   Department of Statistics and Actuarial-Financial Mathematics,\\ 
         \hspace{0.3cm}  University of the Aegean,\\
         \hspace{0.3cm}  Vourlioti Building,\\
         \hspace{0.3cm}  Gr-83200 Karlovassi, Samos, Greece\\
\end{flushleft}
}
\email{nkaval@aegean.gr}
\author{Philippe Souplet}
\address{
\begin{flushleft}
        \hspace{0.3cm}  Laboratoire Analyse G\'eometrie et Applications,\\
         \hspace{0.3cm}  UMR CNRS 7539, Institut Galil\'ee,\\
         \hspace{0.3cm}  Universit\'e Paris-Nord,\\
         \hspace{0.3cm}  99 av. J.-B. Cl\'ement,\\
         \hspace{0.3cm}  93430 Villetaneuse, France\\
\end{flushleft}
}
\email{souplet@math.univ-paris13.fr}
\begin{document}
\date{}
\maketitle
\thispagestyle{empty}
\pagestyle{myheadings}
\markboth{N. I. KAVALLARIS AND Ph. SOUPLET}{GROW-UP RATE FOR A PATLAK-KELLER-SEGEL MODEL}


\begin{abstract}
We consider a special case of the Patlak-Keller-Segel system in a disc,
which arises in the modelling of chemotaxis phenomena.
For a critical value of the total mass, the solutions are known to be global in time but with density becoming unbounded,
leading to a phenomenon of mass-concentration in infinite time.
We establish the precise grow-up rate and obtain refined asymptotic estimates of the solutions. Unlike in most of the similar, recently studied, grow-up problems, the rate is neither 
polynomial nor exponential. In fact, the maximum of the density behaves like $e^{\sqrt{2t}}$ for large time. In particular, our study provides a rigorous proof of a behaviour suggested by Sire and Chavanis [Phys. Rev.~E, 2002] on the basis of formal arguments.

\bigskip

{\it Key Words:} \, Chemotaxis system, critical mass, grow up, upper-lower solutions

 \medskip
{\it $2000$ Mathematics Subject Classification{\rm :}}
 \; Primary  35K60, 35B40, 92C17;\, Secondary  35Q72.


\end{abstract}


\section{Introduction}			\label{introd}

\subsection{The complete Patlak-Keller-Segel model}

Out of the many mathematical models that have been proposed to deal with particular aspects
of chemotaxis, that proposed by Patlak in 1953 (cf.~\cite{Pat}) and Keller and Segel in 1970 (cf.~\cite{KS1}) has received
particular attention. The so called (two-dimensional)  Patlak-Keller-Segel model consists of two equations,
describing the evolution of the population 
density $\rho(x,t)$ of bacteria,
and the concentration $c(x,t)$ of a chemical attracting substance, in a bounded domain $\Omega\subset \R^2$ and in a time interval $[0,T]$\,: 
\begin{eqnarray}
\frac{\partial{\rho}}{\partial t}&=&\nabla\cdot(D_1 \nabla\rho-\chi\rho \nabla c),\label{in1}\\
\theta\frac{\partial{c}}{\partial t}&=&\Delta c-ac+\rho. \label{in2}
\end{eqnarray}
More precisely the first equation describes the random (Brownian) diffusion
of the population of cells, which is 
biased in the direction of a drift velocity, proportional to the gradient of the concentration of the chemoattractant.
The diffusion coefficient is denoted by $D_1>0$
and the proportionality coefficient of the drift (mobility parameter) is denoted by $\chi>0$.
According to the second equation,
the chemoattractant, which is directly emitted by the cells, diffuses with a diffusion coefficient $D_2=1/\theta>0$ on the substrate, while is generated
proportionally to the density of cells and at
the same time is degraded with a rate equal to $a/\theta\geq 0.$ 
In order for system (\ref{in1})-(\ref{in2}) to be well posed it should be supplemented with some initial conditions
\begin{equation}\label{in3}
\rho(x,0)=\rho_0(x)\geq 0,\;\; c(x,0)=c_0(x)\geq 0,
\end{equation}
along with conditions on the boundary $\partial \Omega.$ 
A natural boundary condition, since it guarantees 
the conservation of total mass, is the no-flux type condition for $\rho$, namely  
\begin{equation}\label{in4}
\frac{\partial \rho}{\partial\nu}-\rho\frac{\partial c}{\partial\nu}=0\quad\mbox{on }\partial\Omega,
\end{equation}  
where $\nu$ stands for the outer unit normal vector at $\partial \Omega.$ 
As for $c$, a Dirichlet type boundary condition 
is assumed  i.e.
\begin{equation}\label{in4a}
c=0\quad\mbox{on }\partial\Omega,
\end{equation} 
cf.~\cite{BN1,SC}.
Note that the parabolic system (\ref{in1})-(\ref{in2}) preserves the nonnegativity of the initial conditions,
i.e. $\rho,\,c\geq 0$ for $t>0$, which is also expected to be true for the physical problem. 
For simplicity, $D_1, \chi$
are considered to be constant and under suitable scaling  
can be taken $D_1=\chi=1.$

In view of experimental facts,  
the coefficients $\theta$ and $a$ are assumed to be small
and a simplified form of the 
 Patlak-Keller-Segel system is obtained (in fact this corresponds to the case when
the diffusion and production of $c$ are much faster than the dynamics of $\rho$ and the degradation of $c$).
  Namely, by considering the limiting case 
$\theta,a\to 0+$,
the parabolic-parabolic system (\ref{in1})-(\ref{in4a}) is reduced to the elliptic-parabolic system
\begin{eqnarray}\label{in5}
\frac{\partial{\rho}}{\partial t}&=&\nabla\cdot( \nabla\rho-\rho \nabla c),
\quad x\in \Omega,\;t\in(0,T),\\
-\Delta c&=&\rho,
\quad x\in \Omega,\;t\in(0,T),\label{in6}\\
\frac{\partial \rho}{\partial\nu}-\rho\frac{\partial c}{\partial\nu}&=&0,
\quad x\in \partial\Omega,\;t\in(0,T),\label{in8}\\
c&=&0,\quad x\in \partial\Omega,\;t\in(0,T),\label{in8c}\\
\rho(x,0)&=&\rho_0(x)\geq 0,
\quad x\in \Omega.\label{in7}
\end{eqnarray}
Note that in order for 
(\ref{in5})-(\ref{in7}) to be well posed, 
only the initial data $\rho(x,0)=\rho_0(x)$ must be prescribed.
Moreover, owing to the boundary condition (\ref{in8}), the total (mass) population of cells is conserved, that~is
$$||\rho(\cdot,t)||_1=||\rho_0||_1=:\Lambda\quad\mbox{for } t>0.$$

For a more detailed analysis regarding the modelling 
as well as the behaviour of solutions of chemotaxis systems,
see the review papers \cite{H,Hor1,Hor2} as well as the monograph \cite{S}. Here, it should be noticed that system (\ref{in5})-(\ref{in7}) is also known as Smoluchowski-Poisson system and 
can describe 
the motion of the mean field of many self-gravitating particles \cite{Ch,Ba,B,BN1,SC,W1,W2} or that of polymer molecules \cite{DE}.
The behaviour of the solution to 
(\ref{in5})-(\ref{in7}),
strongly depends on
the parameter $\Lambda.$ In fact, if $\Lambda>8\pi$ and $\Omega=B(0,R),\, R>0,$ then solutions of 
(\ref{in5})-(\ref{in7}) blow up in a 
finite time $T^*(\rho_0),$ that is 
$$
\lim_{t\to T^*}||\rho(\cdot,t)||_{H^1}=\lim_{t\to T^*}||\rho(\cdot,t)||_{L^p}=\lim_{t\to T^*}\int_{\Omega} \rho\log \rho(x,t)dx=\infty
$$
for every $p>1,$ see \cite[Theorem 2(i)]{BN2}, \cite[Theorem 2]{B}. On the other hand for $\Lambda<8 \pi,$  
all solutions of system (\ref{in5})-(\ref{in7}) are global in time, 
cf.~\cite[Theorem 2 (iv)]{BN1}. In the critical case $\Lambda=8\pi$ an 
infinite-time blow-up (grow-up)
occurs, i.e. $||\rho(\cdot,t)||_{\infty}\to \infty$ as $t\to \infty,$ cf.~\cite[Theorem 3]{OSS}, \cite[Proposition 3.2]{BKLN1}.
Finite or infinite-time blow-up can 
be accompanied by the occurrence 
of a $\delta-$function formation in the blow-up set (this represents the trend
of populations to concentrate to form sporae) and is known in the literature as {\it chemotactic collapse}. This phenomenon was  conjectured by Childress and Percus \cite{CP}, Nanjundiah \cite{Nan}, 
and was first verified, via matched asymptotics arguments, for a radially symmetric simplified Patlak-Keller-Segel system in \cite{HV}. 
A result regarding the infinite-time Dirac mass formation for $\rho$ can 
be found in \cite{OSS}, where some characterization of grow-up (mass-concentration) points together with
more grow-up results for different types 
of boundary conditions are also obtained, \cite[Theorem 2 \& Theorem 3]{OSS}. 
For blow-up results  
concerning a variation of system (\ref{in5})-(\ref{in7})
but with Neumann boundary conditions for both $\rho$ and $c,$ see \cite{JL,Na,B1,Na1,SeSu}.

\subsection{The simplified Patlak-Keller-Segel model in the radial case}
In the case when $\Omega$ is the ball $B(0,R),\; R>0,$ and the initial data
$\rho_0(x)=\rho_0(r)$ is radially symmetric, the solution of 
system (\ref{in5})-(\ref{in7}) is radially symmetric, i.e.: 
$\rho(x,t)=\rho(r,t),$ with $r=|x|.$ 
In this case the elliptic-parabolic system (\ref{in5})-(\ref{in7}) 
can be greatly simplified. 
Namely, by introducing the cumulative mass
distribution 
$$
Q(r,t):=\int_{B(0,r)}\rho(x,t)dx=2\pi\int_0^r s\rho(s,t)ds,
$$
which is equal to the mass contained in the sphere $B(0,r)$, the system reduces to a single equation 
\begin{eqnarray}
Q_t&=&Q_{rr}-\frac{1}{r}Q_{r}+\frac{1}{2\pi r}QQ_r,\quad 0<r<R,\;t>0,\label{in11}\\
Q(0,t)&=&0,\quad Q(R,t)=\Lambda.\label{in12}
\end{eqnarray}
By the definition of $Q$, the function 
\be Q(r,0)=Q_0(r)\label{in12a}\ee
 is positive nondecreasing and
satisfies the compatibility conditions $Q_0(0)=0$ and $Q_0(R)=\Lambda.$

As mentioned in \cite{BHN,BN1} the formulation (\ref{in11})-(\ref{in12a}) allows the consideration
of some initial data for the density $\rho$ which could be either unbounded or singular (such as measures), a case that seems rather realistic. 
This means that the initial data $Q_0$ 
for problem (\ref{in11})-(\ref{in12a}) could have unbounded derivatives $Q_{0,r}$ or even be discontinuous. Moreover, using formulation (\ref{in11})-(\ref{in12a}) we have the comparison principle at hand, which is not available for system (\ref{in5})-(\ref{in7}).
Due to the scaling properties of 
equation (\ref{in11}), we can assume without loss of generality that problem (\ref{in11})-(\ref{in12a}) is posed in 
the unit ball $B(0,1).$ (Indeed, it is easily seen that if $Q(r,t)$ is a solution of (\ref{in11})-(\ref{in12a}) then
 $Q(Rr,R^2t)$ is also a solution.)

Using the new variable $x=r^2$ 
(no confusion with the original variable $x$ in (\ref{in5})-(\ref{in7}) should occur)
and defining $N(x,t)=Q(r,t)$, we are led to
the problem
\bge
N_t&=&4xN_{xx}+\frac{1}{\pi}N N_x,\quad 0<x<1,\;t>0,\label{in13}\\
N(0,t)&=&0,\quad N(1,t)=\Lambda,\label{in14}\\
N(x,0)&=&N_0(x),\quad 0<x<1.\label{in15}
\ege
Note that (\ref{in13}) differs from the Burgers equation only by the variable coefficient $x$
in the diffusion term. 
The above problem, although it
 degenerates at $x=0,$ may be handled more easily 
  than (\ref{in11})-(\ref{in12a}) since it contains less terms, and at the same time does not have any singular coefficients in the first order terms.

As is expected, the behaviour of the solution of problem (\ref{in13})-(\ref{in15}) 
(which is well defined for suitable initial data) depends on $\Lambda.$ 
For $\Lambda> 8\pi$ the solution $N$ ceases to exist in a finite time $T^*$,  
 more precisely the boundary condition $N(0,t)=0$ is no longer fulfilled at $t=T^*.$
Moreover a ``gradient blow-up" occurs at $t=T^*$, in the sense that $N_x(0,t)\to\infty$ as $t\to T^*$ 
and the density $\rho$ 
becomes also unbounded at time $T^*,$ cf.~\cite[Theorem 2(i)]{BN3}. On the other hand, for $0<\Lambda<8 \pi$ and any (admissible) initial data
there is a unique global-in-time solution $N\in C([0,\infty);L^2(0,1))\cap C^{2,1}((0,1)\times(0,\infty))$.
Furthermore, $N$ converges to the unique steady state solution~:
\be\label{conv}
N(\cdot,t)\to N_d=8 \pi\frac{x}{x+d},\quad\mbox{as } t\to \infty
\ee 
in $L^p(\Omega),\;p\geq 1$, and even in $L^{\infty}(\Omega)$ provided that
$\sup_{t\geq 0}|N_x|_{\infty}<\infty,$ where $d=\frac{8 \pi}{\Lambda}-1>0,$ cf.~\cite{BKLN1}. 
In this case the rate of the $L^1$-convergence 
of $N(\cdot,t)$ to $N_d$ is shown to be exponential.

In the borderline case $\Lambda=8 \pi,$ the situation is still different and the problem exhibits a typical critical behaviour. 
Namely, it is proved in \cite{BKLN1} 
that there exists a global-in-time solution $N$,  
which converges to the ``singular" steady state $\widetilde{N}(x)\equiv 8\pi$ 
(note that $\widetilde{N}$ does not satisfy the boundary condition at $x=0$).
Actually, as was proven in \cite[Theorem 3]{OSS}, an infinite-time Dirac mass formation at the origin $r=0$ of the ball occurs in this case. However, neither an estimation of the grow-up rate nor the asymptotic profile of the grow-up are provided in \cite{OSS}.
On the other hand the authors in \cite[Proposition 3.2]{BKLN1} obtain the decay estimate
\be\label{in16}
||N(\cdot,t)-8\pi||_{L^1}\leq \frac{8 \pi}{t}\quad\mbox{for}\quad t\geq 1.
\ee
Estimate (\ref{in16}) seems to be far from optimal 
since formal asymptotics performed in \cite{SC} suggest 
a temporal decay estimate of order
\be\label{in17}
||N(\cdot,t)-8\pi||_{L^1}\approx O\bigl(e^{-\sqrt{2t}}\bigr),\quad\mbox{as } t\to \infty.
\ee

\medskip
{\bf Remarks 1.1.} (a)
The (nonradial) Patlak-Keller-Segel system has also been studied in the whole plane $\R^2$.
Again, the behaviour of solutions depends on the initial mass of the system and a dichotomy is found \cite{BDP,DP}.
More precisely, assuming $0\leq (1+|x|^2)\rho_0$ and $\rho_0\log \rho_0\in L^1$, there exists a critical value of the mass $N_c := 8\pi$ such that if 
$0 < ||\rho_0||_1< N_c$ (subcritical
case) only global-in-time solutions exist, while if $||\rho_0||_1 > N_c$ (supercritical case) the solutions blow
up in finite time \cite{BDP,P}. Moreover, in the subcritical case, solutions converge to a self-similar profile as $t\to\infty$
\cite{BKLN2,BDP}. 
Finally, for the critical case $N = N_c$, which was studied in \cite{BCM}, the solution is global-in-time and grows up as a Dirac mass at the centre of mass as $t\to\infty.$
\smallskip

(b) The only previous mathematical study of grow-up rates for a system of Patlak-Keller-Segel type concerns high
dimensions, namely $n\geq 11$, and was performed recently in \cite{Se}. There, some radial global unbounded solutions were constructed in a ball and an infinite sequence of polynomial grow-up rates was obtained (for a suitable sequence of initial data). On the contrary, our results in the present paper for $n=2$ exhibit a grow-up rate independent of the initial data,
\smallskip

(c) Concerning the parabolic-parabolic Patlak-Keller-Segel system, in a bounded domain with Neumann conditions, interesting results can be found in \cite{HV2} and \cite{FHL}, respectively on the asymptotics of finite-time blow-up, and on the convergence of bounded solutions.

\smallskip

(d) Problem (\ref{in13})-(\ref{in15}) is in fact independent of the boundary condition 
(\ref{in8c}) for $c$.
Of course the boundary condition for $c$ has to be taken into account if one wants to 
determine $c$, and not only $\rho$, from~$N$.

\smallskip

(e) In the case where the diffusion of cells is very slow as compared to the diffusion of the chemoattractant, i.e. when $D_1\ll D_2$ in (\ref{in1})-(\ref{in2}), the complete system (\ref{in1})-(\ref{in4a}) is reduced to a single but non-local equation, cf.~\cite{W3}.
The global existence (subcritical case) and the finite-time blow-up (supercritical case) of solutions of the derived non-local equation in the two-dimensional case is studied in \cite{KS}.

\subsection{Heuristic description of the results and methods}
Our aim is to prove rigorously the decay rate (\ref{in17}), as well as to provide a refined asymptotic
profile for $N(x,t)$ as $t\to \infty$.
To normalize the constants arising in 
the calculations we shall 
work with the equivalent problem 
\bge
u_{t}&=&xu_{xx}+2u u_x,\qquad 0<x<1,\;t>0,\label{in13a}\\
u(0,t)&=&0,\qquad u(1,t)=\xi:=\frac{\Lambda}{8 \pi},\qquad t>0,\label{in14a}\\
u(x,0)&=&u_0(x),\qquad 0<x<1,\label{in15a}
\ege
which is obtained from (\ref{in13})-(\ref{in15}) by setting 
$N(x,t)=8\pi u(x,4t)$.

It is clear that the stationary part $xu_{xx}+2uu_x=0$  
of the parabolic equation (\ref{in13a}) is invariant under the rescaling $u(x)\mapsto u(kx)$ ($k>0$).
Moreover, the steady state solution for $\xi<1$ is given by
$$ 
U_{a}(x)=1-\frac{1}{ax+1}=\frac{ax}{ax+1}
$$
for $a=U'_{a}(0)=\frac{\xi}{1-\xi}>0$. Rewriting 
(\ref{conv}) in terms of $u$ we have
$$
u(x,t)\to U_{a}(x),\quad\mbox{as } t\to \infty.
$$ 
Also, observe that $U_{a}(x)$ converges in a monotone increasing way
to the ``singular" steady state $U\equiv 1$ as $a\to \infty\;(\xi\to 1).$ 

Motivated by the above considerations, we shall look for 
 upper and lower solutions of problem (\ref{in13a})-(\ref{in15a}),
which are perturbations of a moving family of steady states 
\be\label{quasistat}
U_{a(t)}(x),
\ee 
the perturbation being defined in terms of the self-similar variable $y=a(t)x$. 
Here, $a$ is a function of time, diverging to infinity,
which is a priori unknown
and will be eventually identified by a suitable ``matching'' procedure
(see Remark~1.3).
Note that such a form was used in \cite{SC} to construct ``approximate solutions'' leading to the formal asymptotics mentioned above.
However, the expansions in \cite{SC} contained only a correction term at the first order, while those that we here construct involve first and second order terms
(see Lemmas~\ref{LemMain1} and \ref{LemMain2}). 
This seems necessary to obtain (rigorous) upper and lower solutions
living ``close"  to the actual solution, and eventually providing us with the desired decay rate,
as well as with a good description of the asymptotic profile.

As a consequence of this construction
we shall 
derive an infinite-time boundary 
gradient grow-up result, 
with the grow-up rate
\be\label{in17b}
u_x(0,t)=A(t)\Bigl(1+O\bigl(t^{-1/2}\log t\bigr)\Bigr),
\quad\mbox{as } t\to\infty,\qquad\hbox{where }ÊA(t)=\exp\left[\frac{5}{2}+\sqrt{2t}\right]. 
\ee
 In terms of the solution of the original system (\ref{in5})-(\ref{in7}),
(\ref{in17b}) gives an estimate of the central density of bacteria,
since $\rho(0,t)=8u_x(0,4t)$.
Note that (\ref{in17b}) was also predicted 
 by formal arguments in \cite{SC}.
At the same time, we show the $C^1$ regularity of $u$ 
 up to the boundary for all finite time
 intervals, 
hence ruling out the possibility of $\sup_{x\in[0,1]}u_x(x,t)$ blowing-up in finite time. 
This problem was left open in \cite{BKLN1}.
Moreover, we obtain a precise asymptotic expansion of the solution (see formula (\ref{expansionmainthm}) in Theorem~\ref{Theorem 1}).
It expresses the solution as the sum of a quasi-stationary profile (cf.~(\ref{quasistat})) and of a correction term
which becomes significant only for $x$ bounded away from $0$.
As a consequence, we obtain the decay 
$$
\|u(\cdot,t)-1\|_{L^1(0,1)}=\Bigl(1+O\bigl(t^{-1/2}\log t\bigr)\Bigr)\sqrt{2t}\,\exp\left[-\frac{5}{2}-\sqrt{2t}\right],
\quad\mbox{as } t\to\infty,$$
again in accordance with the predictions 
in \cite{SC}.

\medskip

{\bf Remark 1.2.} The study of unbounded global classical solutions of superlinear parabolic problems and their asymptotic behaviour
has recently attracted substantial mathematical interest.
Particular effort has been devoted to the reaction-diffusion equation 
\be\label{eqnup}
u_t-\Delta u=u^p,
\ee
where such solutions are known to exist for suitable $p>1$. 
Let us mention the works \cite{LT, DGLV, GK} for the Dirichlet problem in a ball, and \cite{PY, FWY, FKWY, Mi} for the Cauchy problem. 
See also \cite[Sections~22 and 29]{QS} for related questions. The case of the Frank-Kamenetskii equation (with nonlinearity $e^u$ instead of $u^p$) is also studied in \cite{DGLV}.
As for the diffusive Hamilton-Jacobi equation 
\be\label{eqnVHJ}
u_t-u_{xx}=|u_x|^p,
\ee
with $p>2$, results of this kind can be found in \cite{SV}.
A common feature in all these examples is the stabilization of the solution to a singular steady-state, the
growing-up quantity being $\|u(t)\|_\infty$ (resp., $\|u_x(t)\|_\infty$) for equation (\ref{eqnup}) (resp., (\ref{eqnVHJ})).
It has to be noted that the grow-up rates, as $t\to\infty$, usually behave either like $e^{\mu t}$ or $t^k$.
The only known exception (see \cite{GK}) seems to be the case of equation (\ref{eqnup}) 
with zero boundary conditions, in a ball of $\R^4$ with critical Sobolev exponent ($p=3$).
In this situation, unlike in spatial dimensions $n\neq 4$, there holds
$\log\|u(t)\|_\infty\sim 2\sqrt t$, thus leading to a similar rate as in our problem.

\medskip

{\bf Remark 1.3.} Let us point out that our determination of the grow-up rate 
for problem (\ref{in13a})-(\ref{in15a}) is achieved
through the matching of (sub- or super-) solutions with the imposed 
boundary condition at the right end-point $x=1$, an idea
 also present in \cite{GK}.
This is different from what is done in \cite{DGLV,FWY, FKWY,SV}, where
the grow-up rate is determined by the matching between inner and outer (sub-/super-) solutions.
In those works, the inner solution corresponds to a self-similar, quasi-stationary evolution along a continuum of regular steady states
(similar to (\ref{quasistat})), but
the outer solution is obtained by a linearization around the singular steady state.
Here, on the contrary, the behaviour of $u$  
 in the inner and outer regions is unified through a single self-similar variable $y=a(t)x$
(cf.~formulae~(\ref{AnsatzUnderu}) and (\ref{AnsatzOveruA}) below).
Moreover we point out that, in our case, a linearization around the ``singular" steady
state $U\equiv 1$ would not give the desired grow-up rate (\ref{in17b}), but only a non-optimal exponential upper bound
based on an associated eigenvalue problem.

\medskip

The paper is organized as follows.  
In Section~2 we state the main results. Section~3 contains a number of preliminary results:
In Subsection~3.1, we recall the basic facts concerning local existence
and comparison. 
In Subsections~3.2 and 3.3, we show that a control on the slope at $x=0$ is enough to 
prevent gradient blow-up (Lemma~\ref{LemC1}), and we obtain 
preliminary estimates of solutions for small time, which will be useful in Section~4 to initialize the comparison with the main sub-/supersolutions (Lemmas~\ref{LemInit1} and \ref{LemInit2}). Subsection~3.4 is devoted to
the study of a second order ordinary differential operator which plays a key role in the subsequent construction of sub-/supersolutions.
The proofs of the main results are given in Section~4:
Subsections~4.1 and 4.2 are devoted to the construction of the main sub- and supersolutions respectively;
The proofs of Theorem~\ref{Theorem 1} and Corollary~\ref{Corollary 2} are finally completed in Subsection~4.3.
 
\section{Main results}
Consider the problem
\bge
u_t-xu_{xx}&=& 2uu_x,\quad 0<x<1,\ t>0, \label{pbmu1}\\
u(0,t)&=&0,\quad t>0, \label{pbmu2}\\
u(1,t)&=& 1,\quad t>0, \label{pbmu3}\\
u(x,0)&=&u_0(x),\quad 0\leq x\leq 1. \label{pbmu4}
\ege
Concerning the initial data, we assume that 
\be\label{hypIDa}
u_0\in C([0,1]),\quad u_0(0)=0,\quad u_0(1)=1,
\quad\hbox{$u_0$ is nondecreasing,}
\ee
and that
\be\label{hypIDb}
u_0(x)\leq Kx,\ \ 0<x<1, \quad\hbox{ for some $K\geq 1$}.
\ee
Problem (\ref{pbmu1})-(\ref{pbmu4}) admits a unique global solution, with $u\in C([0,1]\times [0,\infty))$, $u\in C^{2,1}((0,1]\times (0,\infty))$
(see \cite{BKLN1} and Section~3.1 below).
Moreover it was shown in \cite{BKLN1} that $0\leq u\leq 1$, $u$ is nondecreasing in $x$, and 
\be\label{CvToOne}
\lim_{t\to\infty} u(x,t)=1,\quad\hbox{uniformly for $x$ in compact subsets of $(0,1]$.}
\ee

Our main results are the following:

\begin{thm}
\label{Theorem 1}
Let $u_0$ satisfy (\ref{hypIDa}) and (\ref{hypIDb})
and denote by $u$ the global solution of problem (\ref{pbmu1})-(\ref{pbmu4}).
Then there holds
\be\label{expansionmainthm}
1-u(x,t)={1-x+O\bigl(t^{-1/2}\log t\bigr)\over 1+A(t)x},
\quad\mbox{uniformly in $[0,1]$, as $t\to\infty$,}
\ee
with
\be\label{amainthm}
A(t)=\exp\left[\frac{5}{2}+\sqrt{2t}\right].
\ee
Moreover, we have the regularity property
\be\label{reguxThm}
u_x\in C\bigl([0,1]\times (0,\infty)\bigr)
\ee
and the estimate
\be\label{CentralDensity}
u_x(0,t)=A(t)\Bigl(1+O\bigl(t^{-1/2}\log t\bigr)\Bigr),
\quad\mbox{as } t\to \infty.
\ee
\end{thm}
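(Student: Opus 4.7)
The plan is to prove Theorem~\ref{Theorem 1} by sandwiching $u(x,t)$ between carefully constructed families of sub- and supersolutions that share the common asymptotic profile (\ref{expansionmainthm}), and then to read off the regularity statement (\ref{reguxThm}) and the central-density estimate (\ref{CentralDensity}) from the resulting pointwise bounds. The comparison principle needed for this sandwiching is available in the cumulative-mass formulation, as emphasized in the introduction, and transfers to the $u$-problem via the change of variables described there.

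Following the heuristic of Subsection~1.3 and Remark~1.3, I look for sub-/supersolutions of the form
\begin{equation*}
v(x,t) = U_{a(t)}(x) + \frac{1}{a(t)}\,\varphi\bigl(a(t)x\bigr) + \frac{1}{a(t)^2}\,\psi\bigl(a(t)x\bigr),
\end{equation*}
where $U_{a}(x) = ax/(ax+1)$ is the family of regular steady states, $y = a(t)x$ is the self-similar variable unifying the inner and outer regions, and $a(t) \to \infty$ is an unknown scale to be determined. Because $1 - U_{a}(1) = 1/(a+1)$, the right-boundary value of $v$ does not equal $1$ unless the correction satisfies $\varphi(a)/a + \psi(a)/a^{2} = 1/(a+1)$, while $\varphi(0) = \psi(0) = 0$ preserves the condition at $x=0$. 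Substituting $v$ into $v_t - xv_{xx} - 2vv_x$ and collecting terms by powers of $1/a$, the contribution of $U_{a(t)}$ reduces (thanks to the steady-state identity $-xU_a'' = 2U_aU_a'$) to the time-derivative term $\dot a(t)\,\partial_a U_{a(t)}$, and the corrections $\varphi,\psi$ satisfy linear second-order ODEs in $y$ driven by the corresponding linearised operator of the stationary equation, whose properties are exactly the content of Subsection~3.4. Imposing the boundary matching at $y = a(t)$ forces a relation $\dot a/a \sim c/\log a$, whose integration gives $\log a(t) = \sqrt{2t} + \tfrac{5}{2} + o(1)$, i.e.\ $a(t) \sim A(t)$.

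To upgrade this formal analysis into rigorous inequalities, I would choose $a(t)$ slightly above (resp.\ below) the formal solution so that the residual produced by $v$ in the parabolic operator has a definite sign on the whole of $(0,1) \times (t_0,\infty)$, yielding a supersolution (resp.\ subsolution) on the left boundary strip, the bulk and a neighborhood of $x=1$. To initialize the comparison at some large time $t_0$, I would combine the uniform convergence (\ref{CvToOne}) on compact subsets of $(0,1]$, the monotonicity in $x$, and the small-time preliminary estimates (Lemmas~\ref{LemInit1} and~\ref{LemInit2}) to sandwich $u(\cdot,t_0)$ between the traces of $\underline u(\cdot,t_0)$ and $\overline u(\cdot,t_0)$ after a suitable adjustment of the integration constant entering $a(t)$. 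Propagating forward with the comparison principle then yields (\ref{expansionmainthm}) uniformly on $[0,1]$. Differentiating the expansion at $x=0$ and using $u_x(0,t) = A(t)(1+o(1))$ gives (\ref{CentralDensity}); the $C^1$ regularity (\ref{reguxThm}) follows because a uniform bound on $u_x(0,t)$ over bounded time intervals, combined with Lemma~\ref{LemC1}, prevents gradient blow-up in the interior.

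The main obstacle I expect is to make the second-order expansion genuinely produce sub-/supersolution inequalities throughout $(0,1)$, rather than only near the boundaries. On intermediate regions where $y = a(t)x$ is neither $O(1)$ nor comparable to $a(t)$, the leading residual coming from the interaction between the time derivative of the profile and the quadratic nonlinearity has ambiguous sign; the role of $\psi$ is precisely to cancel this contribution, and its choice must be coordinated carefully with the ODE analysis of Subsection~3.4 so that what remains has a sign and is of the order $\dot a^{2}/a^{3}$, which is absorbed by the slack built into $a(t)$. Controlling this residual, together with verifying the logarithmic factor in the error term $O(t^{-1/2}\log t)$, constitutes the technical heart of the argument.
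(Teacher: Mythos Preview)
Your outline follows essentially the same strategy as the paper: a two-term self-similar expansion around $U_{a(t)}$, matching at $x=1$ to determine $a(t)$, comparison via time-shifts initialized through Lemmas~\ref{LemInit1}--\ref{LemInit2}, and regularity via Lemma~\ref{LemC1}. Two points of execution differ from what you wrote and are worth noting. First, the correction amplitudes should not be $1/a,\,1/a^{2}$ but rather $b,\,b^{2}$ with $b:=a'/a^{2}\sim 1/(a\log a)$; this is the scaling that makes the time-derivative of $U_{a(t)}$ balance against the linearised corrections and yields clean ODEs (the operator $\mathcal L$ of Subsection~3.4) for the profiles. Second, the sub- and supersolution are \emph{not} symmetric: the Ansatz you describe works directly as a subsolution, but for the supersolution the paper finds it necessary to replace $g$ by $(1+\varepsilon(t))h$, where $h$ contains an extra forcing term $M\varphi$ with $\varphi(y)=1/\log y$ for large $y$, and $\varepsilon=\gamma:=(a/a')'$; without this modification the residual does not acquire the required sign on the intermediate region. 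Finally, for (\ref{CentralDensity}) you should not ``differentiate the expansion'' (a pointwise estimate cannot be differentiated); instead, the paper reads off $\underline u_x(0,t)=a(t)$ and $\overline u_x(0,t)=a(t)$ directly from the explicit formulae (using $f'(0)=1$, $g'(0)=h'(0)=0$), and the bound on $u(x,t)/x$ coming from the supersolution (not on $u_x(0,t)$ itself, which is not yet known to exist) is what feeds into Lemma~\ref{LemC1}.
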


\begin{cor}
\label{Corollary 2}
Let $u_0$ satisfy (\ref{hypIDa}) and (\ref{hypIDb}),
denote by $u$ the global solution of problem (\ref{pbmu1})-(\ref{pbmu4}),
and let $A(t)$ be given by (\ref{amainthm}).
\smallskip

\noindent (i) The solution $u$ satisfies
the inner layer expansion (quasi-stationary behaviour):
$$1-u(x,t)={1+o(1)\over 1+A(t)x},
\quad\hbox{uniformly in any region $x\leq o(1)$, as $t\to\infty$.}$$
\smallskip

\smallskip
\noindent (ii) We have the $L^1$-decay rate
$$ 
\|u(\cdot,t)-1\|_{L^1(0,1)}=\Bigl(1+O\bigl(t^{-1/2}\log t\bigr)\Bigr)\sqrt{2t}\,\exp\left[-\frac{5}{2}-\sqrt{2t}\right],
\quad\mbox{as } t\to\infty.
$$
\end{cor}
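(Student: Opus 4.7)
The plan is to derive both statements directly from the uniform expansion (\ref{expansionmainthm}) in Theorem~\ref{Theorem 1}. Since $0\leq u\leq 1$ (as recalled after (\ref{pbmu4})), the $L^1$-norm in (ii) equals $\int_0^1 (1-u(x,t))\,dx$, so throughout I may work with this integral in place of $\|u(\cdot,t)-1\|_{L^1(0,1)}$.

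For (i), I would simply restrict (\ref{expansionmainthm}) to the regime where $x=o(1)$ as $t\to\infty$. Under that restriction the numerator satisfies $1-x+O(t^{-1/2}\log t)=1+o(1)$, while the denominator $1+A(t)x$ is exactly the one appearing in the claim, so the inner layer expansion follows immediately.

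For (ii), the idea is to integrate (\ref{expansionmainthm}) over $(0,1)$ and isolate the leading contribution. The $O(t^{-1/2}\log t)$ part of the numerator produces
\[
O\bigl(t^{-1/2}\log t\bigr)\int_0^1\frac{dx}{1+A(t)x}=O\bigl(t^{-1/2}\log t\bigr)\,\frac{\log(1+A(t))}{A(t)},
\]
which, in view of $\log A(t)=\tfrac{5}{2}+\sqrt{2t}$, is of order $O(\log t)\,A(t)^{-1}$. For the principal piece I would evaluate the elementary integral
\[
\int_0^1\frac{1-x}{1+Ax}\,dx=\frac{(A+1)\log(1+A)-A}{A^{2}}
\]
(for instance via the substitution $s=1+Ax$). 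Setting $A=A(t)$ and expanding $\log(1+A(t))=\sqrt{2t}+\tfrac{5}{2}+O\bigl(e^{-\sqrt{2t}}\bigr)$, the right-hand side reads $\sqrt{2t}\,A(t)^{-1}\bigl(1+O(t^{-1/2})\bigr)$, which produces exactly the prefactor $\sqrt{2t}\,\exp[-5/2-\sqrt{2t}]$ claimed in the corollary. Combining this with the error contribution above yields a relative error of $O(t^{-1/2}\log t)$, as asserted.

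I do not expect a genuine obstacle here, since all the analytic work is concentrated in Theorem~\ref{Theorem 1}; the only thing to be careful about is the bookkeeping of subleading terms. Concretely, one must verify that each of the corrections — the constant $\tfrac{5}{2}$ in $\log A(t)$, the $-A$ term in the antiderivative, and the tail $A^{-2}\log(1+A)$, together with the integrated $O(t^{-1/2}\log t)$ remainder — contributes at most $O(\log t)\,A(t)^{-1}$. Dividing by the leading term $\sqrt{2t}\,A(t)^{-1}$ then confirms that the total relative error is $O(t^{-1/2}\log t)$, matching the statement.
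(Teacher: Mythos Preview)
Your proposal is correct and follows essentially the same route as the paper: part (i) is read off directly from (\ref{expansionmainthm}), and part (ii) follows by integrating (\ref{expansionmainthm}) and controlling the various pieces. The only cosmetic difference is that you evaluate $\int_0^1\frac{1-x}{1+Ax}\,dx$ in closed form, whereas the paper splits it as $\int_0^1\frac{dx}{1+Ax}-\int_0^1\frac{x\,dx}{1+Ax}$ and simply bounds the second integral by $1/A(t)$; either way the bookkeeping yields the same $O(t^{-1/2}\log t)$ relative error.
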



\section{Preliminaries}	\label{pre}

\subsection{Local existence and comparison principle.}

By \cite[Theorem~2.1]{BKLN1}, we know that for any $u_0$ satisfying (\ref{hypIDa}), 
problem (\ref{pbmu1})-(\ref{pbmu4}) admits a global solution $u$ in the following sense:
\be\label{reguA}
u\in C\bigl([0,\infty);L^1(0,1)\bigr),
\ee
\be\label{reguB}
u\in C^{2,1}\bigl((0,1]\times (0,\infty)\bigr),
\ee
\be\label{reguC}
u_x\geq 0,\quad 0<x\leq 1,\ t>0,
\ee
and
\bge
u_t-xu_{xx}&=& 2uu_x,\quad 0<x<1,\ t>0, \label{reguD1}\\
\lim_{x\to 0^+}u(x,t)&=&0,\quad\hbox{for a.e. $t>0,$} \label{reguD2}\\
u(1,t)&=& 1,\quad t>0, \label{reguD3}\\
u(\cdot,0)&=&u_0\quad\hbox{in $L^1(0,1)$}. \label{reguD4}
\ege
Note that the solution is obtained in \cite{BKLN1} as limit of solutions of regularized problems,
where $xu_{xx}$ is replaced by $(x+\eps)u_{xx},\; \eps>0.$
Also, for each $T>0$, $u$ is the unique local solution
of (\ref{reguA})-(\ref{reguD4}) on $(0,T)$.
If, moreover, $u_0$ satisfies (\ref{hypIDb}), then
$$ 
u\in C\bigl([0,1]\times [0,\infty)\bigr),
$$
see also \cite[Theorem 1 (i)]{BN3}.
The continuity for $t>0$ and $x=0$ follows from \cite[Propositions~2.4 and 2.5]{BKLN1}. 
For $t=0$ and $x\in [0,1]$, the continuity can be established
by comparison with simple barrier functions.

\medskip
The following proposition provides a comparison principle suitable to our needs.

\begin{prop}
\label{PropComp}
Let $\tau>0$ and the functions $u,v$ satisfy the following regularity conditions:
\be\label{reguAcomp}
u, v\in C\bigl([0,\tau);L^1(0,1)\bigr),
\ee
\be\label{reguBcomp}
u, v\in C^{2,1}\bigl((0,1]\times (0,\tau)\bigr),
\ee
\be\label{reguCcomp}
u_x, v_x\in L^1_{loc}\bigl([0,1]\times (0,\tau)\bigr),
\ee
\be\label{reguC2comp}
u, v\in L^\infty_{loc}\bigl([0,1]\times (0,\tau)\bigr).
\ee
Assume that
\bge
u_t-xu_{xx}-2uu_x&\leq &v_t-xv_{xx}-2vv_x,\quad 0<x<1,\ 0<t<\tau, \label{reguD1comp}\\
\lim_{x\to 0^+}u(x,t)&\leq &\lim_{x\to 0^+}v(x,t),\quad\hbox{for a.e. $t\in(0,\tau),$} \label{reguD2comp}\\
u(1,t)&\leq &v(1,t),\quad 0<t<\tau, \label{reguD3comp}\\
u(\cdot,0)&\leq &v(\cdot,0)\quad\hbox{a.e. in $(0,1)$}. \label{reguD4comp}
\ege
Then $u\leq v$ in $(0,1)\times (0,\tau)$.
\end{prop}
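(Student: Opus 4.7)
Set $w=u-v$. By (\ref{reguBcomp}) and the identity $2(uu_x - vv_x) = [(u+v)w]_x$, the hypothesis (\ref{reguD1comp}) reads pointwise on $(0,1)\times(0,\tau)$ as
\be\label{diffineqplan}
w_t - xw_{xx} - [(u+v)w]_x \leq 0,
\ee
together with $w(1,t)\leq 0$, $w(\cdot,0)\leq 0$ a.e., and $\lim_{x\to 0^+} w(x,t)\leq 0$ for a.e.\ $t$. The plan is to establish the $L^1$-contraction
$$\int_0^1 w_+(x,t)\,dx \leq \int_0^1 w_+(x,0)\,dx = 0,$$
where the equality follows from (\ref{reguD4comp}); since $w_+\geq 0$, this forces $w\leq 0$ a.e.

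To derive the contraction, I would multiply (\ref{diffineqplan}) by the nonnegative test function $H_\eps(w)$, where $H_\eps$ is a smooth nondecreasing approximation of the Heaviside ($H_\eps = 0$ on $(-\infty,0]$, $H_\eps = 1$ on $[\eps,\infty)$), and integrate over $(\delta,1)$ for a small $\delta>0$. Setting $G_\eps(s)=\int_0^s H_\eps$ (so $G_\eps(s)\to s_+$) and $K_\eps(s)=sH_\eps(s)-G_\eps(s)$ (so $|K_\eps|\leq \eps$), the chain rules $H_\eps(w)w_t=\partial_t G_\eps(w)$, $H_\eps(w)w_x=\partial_x G_\eps(w)$, together with the identity $H_\eps(w)[(u+v)w]_x = [(u+v)G_\eps(w)]_x + K_\eps(w)(u+v)_x$ and integration by parts (with no contribution at $x=1$ because $w(1,\cdot)\leq 0$ forces $H_\eps(w(1,t)) = G_\eps(w(1,t))=0$), yield
$$\frac{d}{dt}\int_\delta^1 G_\eps(w)\,dx \leq -\delta H_\eps(w(\delta,t))\,w_x(\delta,t) + [1-(u+v)(\delta,t)]\,G_\eps(w(\delta,t)) + \int_\delta^1 K_\eps(w)(u+v)_x\,dx,$$
after discarding the nonnegative dissipation term $\int_\delta^1 xH_\eps'(w)w_x^2\,dx$. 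Integrating in $t$ on $[t_1,t]\subset(0,\tau)$ and letting $\eps\to 0^+$, the bulk remainder vanishes because $|K_\eps|\leq \eps$ and $(u+v)_x\in L^1_{\text{loc}}$ by (\ref{reguCcomp}); dominated convergence via (\ref{reguC2comp}) handles the pointwise limits $G_\eps(w)\to w_+$ and $H_\eps(w)\to \chi_{\{w>0\}}$.

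The main obstacle is passing $\delta\to 0^+$ in the two resulting boundary terms at $x=\delta$. The contribution $[1-(u+v)(\delta,s)]\,w_+(\delta,s)$ tends to $0$ for a.e.\ $s$ by (\ref{reguD2comp}) and is uniformly dominated thanks to (\ref{reguC2comp}), so its time integral vanishes by dominated convergence. The contribution $\delta\,w_x(\delta,s)$ has no pointwise limit a priori, since $w_x$ may blow up as $x\to 0^+$; however, a Fubini/averaging argument based on (\ref{reguCcomp}) gives $\int_{t_1}^t\!\int_0^{\delta_0}\delta\,|w_x(\delta,s)|\,d\delta\,ds \leq \delta_0\int_{t_1}^t\!\int_0^{\delta_0}|w_x|\,d\delta\,ds \to 0$ as $\delta_0\to 0$, so one may extract a sequence $\delta_n\to 0^+$ along which $\delta_n\int_{t_1}^t |w_x(\delta_n,s)|\,ds\to 0$. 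Passing first $\delta_n\to 0^+$ and then $t_1\to 0^+$ (using (\ref{reguAcomp}) to obtain $\int_0^1 w_+(\cdot,t_1)\,dx \to \int_0^1 w_+(\cdot,0)\,dx=0$), one concludes $\int_0^1 w_+(x,t)\,dx\leq 0$ for every $t\in(0,\tau)$, hence $u\leq v$. The weak regularity at $x=0$ (only $L^\infty_{\text{loc}}$ values, $L^1_{\text{loc}}$ gradient, and a.e.\ limit) is precisely what forces the Kato-type test function approach in lieu of a direct exponentially weighted maximum principle, which would require $v_x$ bounded up to $x=0$.
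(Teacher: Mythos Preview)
Your argument is correct and follows essentially the same Kato-type $L^1$-contraction strategy as the paper's proof: both multiply the difference inequality by a smooth approximation of $\mathrm{sign}_+$, integrate by parts on a truncated interval, extract a sequence $\delta_n\to 0$ along which $\delta_n\int|w_x(\delta_n,\cdot)|\to 0$ via the $L^1_{loc}$ gradient hypothesis, and kill the remaining boundary contribution by the a.e.\ ordering at $x=0$. The only cosmetic differences are that the paper writes the diffusion in divergence form $\partial_x(xz_x+z(u+v-1))$ and sends the spatial cutoff to zero before the regularization parameter, whereas you keep $xw_{xx}$ in non-divergence form, use the algebraic identity with $K_\eps$ (equivalent to the paper's $z\phi''_\delta(z)\to 0$ step), and reverse the order of the two limits; neither change affects the substance.
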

Observe that (\ref{reguCcomp}) implies that the limits in (\ref{reguD2comp}) exist for a.e.~$t\in (0,\tau)$.
Note also that for any $u_0$ satisfying (\ref{hypIDa}), the solution $u$ of problem (\ref{reguA})-(\ref{reguD4}) satisfies conditions
 (\ref{reguCcomp}) and (\ref{reguC2comp}), 
as a consequence of (\ref{reguC}), (\ref{reguD2}) and (\ref{reguD3}).

\medskip
{\it Proof of Proposition~\ref{PropComp}.} It is a modification of the stability proof in \cite[Theorem~3.1]{BKLN1}. Let $z=u-v$. By (\ref{reguD1comp}), we have
\be\label{phs1}
z_t\leq xz_{xx}+2uu_x-2vv_x=\partial_x\bigl(xz_x+z(u+v-1)\bigr),\quad 0<x<1,\ 0<t<\tau.
\ee
For $\delta\in(0,1)$ we define the following $C^1$ (and piecewise $C^2$)
convex approximations of the function $s\to s_+=\max(s,0)$:
$$\phi_\delta(s)=
\begin{cases}
0, \quad\hbox{ if $-\infty<s\leq \delta$, }\\
\noalign{\vskip 1mm}
(2\delta)^{-1}(s-\delta)^2, \quad\hbox{ if $\delta\leq s\leq 2\delta$, }\\
\noalign{\vskip 1mm}
s-3\delta/2, \quad\hbox{ if $2\delta<s<\infty$. }\\
\end{cases}$$

Fix $0<t_1<t_2<\tau$ and $\delta\in(0,1)$. Then, for any $\eps\in(0,1)$, 
multiplying (\ref{phs1}) by $\phi'_\delta(z),$ integrating by parts and using (\ref{reguC2comp}), (\ref{reguD3comp}), $0\leq \phi'_\delta\leq 1$
and $\phi''_\delta\geq 0$, we obtain
\begin{eqnarray*}
&&\int_\eps^1\phi_\delta(z(x,t_2))\,dx-\int_\eps^1\phi_\delta(z(x,t_1))\,dx\\
&=&\int_{t_1}^{t_2}\int_\eps^1 \phi'_\delta(z)z_t\,dxdt
\leq\int_{t_1}^{t_2}\int_\eps^1 \phi'_\delta(z)\partial_x\bigl(xz_x+z(u+v-1)\bigr)\,dxdt\\
&=&\int_{t_1}^{t_2}\Bigl[ \phi'_\delta(z)\bigl(xz_x+z(u+v-1)\bigr)\Bigr]_\eps^1\,dt
-\int_{t_1}^{t_2}\int_\eps^1 \phi''_\delta(z)\bigl(xz_x+z(u+v-1)\bigr)z_x\,dxdt\\
&\leq&\eps\int_{t_1}^{t_2}|z_x(\eps,t)|\,dt+C\int_{t_1}^{t_2}\phi'_\delta(z(\eps,t))\,dt
-\int_{t_1}^{t_2}\int_\eps^1 \phi''_\delta(z)(u+v-1)zz_x\,dxdt\equiv I_\eps+J_\eps+K_\eps,
\end{eqnarray*}
where $C$ depends on $t_1,t_2$ but is independent of $\eps$ (and $\delta$).
Owing to (\ref{reguCcomp}), there exists a sequence $\eps_n\to 0+$ such that $\lim_{n\to\infty}I_{\eps_n}=0$.
Next, since $\lim_{n\to\infty}\phi'_\delta(z(\eps_n,t))=0$ for a.e. $t\in (t_1,t_2)$ due to (\ref{reguD2comp}) and
the definition of $\phi_\delta$,
we deduce that $\lim_{n\to\infty}J_{\eps_n}=0$ by dominated convergence.
Consequently,
$$\int_0^1\phi_\delta(z(x,t_2))\,dx-\int_0^1\phi_\delta(z(x,t_1))\,dx
\leq -\int_{t_1}^{t_2}\int_0^1 \phi''_\delta(z)(u+v-1)zz_x\,dxdt
\leq C\int_{t_1}^{t_2}\int_0^1 |z\phi''_\delta(z)||z_x|\,dxdt.$$

Now, observe that $\lim_{\delta\to 0}\phi_\delta(s)=s_+$ and $\lim_{\delta\to 0}s\phi''_\delta(s)=0$ for each $s\in\R$.
Using $0\leq\phi_\delta(s)\leq s_+$, (\ref{reguCcomp}) and (\ref{reguC2comp}),
we may pass to the limit $\delta\to 0$ by dominated convergence in the preceding inequality and we obtain
$$\int_0^1 z_+(x,t_2)\,dx-\int_0^1z_+(x,t_1)\,dx\leq 0.$$
Letting $t_1\to 0+$ and using (\ref{reguAcomp}) and (\ref{reguD4comp}), 
we conclude that $\int_0^1 z_+(x,t_2)\,dx=0$ for all $t_2\in (0,\tau)$,
hence $u\leq v$ in $(0,1)\times (0,\tau)$.
\fin

\subsection{Sufficient condition for $C^1$ regularity.}
As noted in, e.g., \cite[Section~2.2]{BCKSV} and in \cite[Section 2]{GP}, by means of the transformation
$$w(r,t):=\frac{8}{r^2}u(r^2,4t)=\frac{1}{\pi r^2}\int_{B_r}\rho(y,4t)\,dy$$
(and $w_0(r):=8r^{-2}u_0(r^2)$),
problem (\ref{pbmu1})-(\ref{pbmu4}) becomes equivalent to
\bge
w_t-\tilde\Delta w&=& w^2
+\textstyle{r\over 2}{\hskip 1pt}w{\hskip 1pt}w_r,\quad 0<r<1,\ t>0, \label{pbmw1}\\
w_r(0,t)&=&0,\quad t>0, \label{pbmw2}\\
w(1,t)&=& 8,\quad t>0, \label{pbmw3}\\
w(r,0)&=&w_0(r),\quad 0<r<1, \label{pbmw4}
\ege
where $\tilde\Delta w:=w_{rr}+{3\over r}w_r$, which in turn corresponds to the radial Laplacian in $4$ space dimensions. 
It should be noticed that $w$ has the 
same scale invariance as $\rho$, but is smoother than $\rho$. Problem (\ref{pbmw1})-(\ref{pbmw4}) turns out to be convenient regarding the study  of  $C^1$ regularity of $u$ up to the boundary, which was left open in \cite{BKLN1}. 
Namely, using this transformation, one can show
the following additional properties for $u$.

\begin{lem}
\label{LemC1}
Let $u_0$ satisfy (\ref{hypIDa}) and (\ref{hypIDb})
and denote by $u$ the global solution of problem (\ref{pbmu1})-(\ref{pbmu4}).
For any given $T>0$, if
\be\label{hypBddSlope}
\sup_{0<x<1,\ 0<t<T}{u(x,t)\over x}<\infty,
\ee
then
\be\label{reguxA}
u_x\in C\bigl([0,1]\times (0,T]\bigr)
\ee
and
\be\label{reguxB}
u_x(0,t)>0,\quad 0<t\leq T.
\ee
\end{lem}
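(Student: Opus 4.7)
The plan is to exploit the transformation $w(r,t)=8r^{-2}u(r^2,4t)$ recalled in the excerpt, which turns the degenerate problem (\ref{pbmu1})--(\ref{pbmu4}) into the nondegenerate semilinear problem (\ref{pbmw1})--(\ref{pbmw4}), with $\tilde\Delta$ the radial Laplacian in $\R^4$. The key observation is that the hypothesis (\ref{hypBddSlope}) translates directly into a uniform bound $0\le w\le 8M$ on $(0,1)\times(0,T/4]$, with $M:=\sup u(x,t)/x$. Lifting $w$ to the radial function $W(y,t):=w(|y|,t)$ on the unit ball $B_1\subset\R^4$, the problem reads
\[
W_t-\Delta W=W^2+\tfrac{1}{2}W\,(y\cdot\nabla W),\qquad y\in B_1,\ t\in(0,T/4],
\]
with Dirichlet data $W=8$ on $\partial B_1$. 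Writing the right-hand side as $b\cdot\nabla W+cW$ with $b:=\tfrac{1}{2}Wy$ and $c:=W$, the coefficients are uniformly bounded in $L^\infty$.

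Since $u\in C^{2,1}((0,1]\times(0,\infty))$, $W$ is a classical solution of this equation on $(B_1\setminus\{0\})\times(0,T/4]$. Being bounded on the whole of $B_1\times(0,T/4]$, standard interior parabolic theory (De~Giorgi--Nash--Moser H\"older estimates followed by Schauder bootstrapping applied to the linear equation with coefficients $b,c\in L^\infty$) then delivers $W\in C^{2+\alpha,1+\alpha/2}_{\mathrm{loc}}(B_1\times(0,T/4])$; in particular $W$ and $\nabla W$ extend continuously across the origin $y=0$ for each $t\in(0,T/4]$. Applying the strong maximum principle to $W_t-\Delta W-b\cdot\nabla W-cW=0$, together with $W\ge 0$ (from (\ref{reguC})) and $W\not\equiv 0$ (since $W=8$ on $\partial B_1$), yields $W>0$ throughout $B_1\times(0,T/4]$.

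Translating back via $u(x,s)=(x/8)\,w(\sqrt x,s/4)$, one computes
\[
u_x(x,s)=\tfrac{1}{8}w(\sqrt x,s/4)+\tfrac{\sqrt x}{16}w_r(\sqrt x,s/4),
\]
which extends continuously up to $x=0$ by the regularity of $W$, proving (\ref{reguxA}); letting $x\to 0^+$ gives $u_x(0,t)=\tfrac{1}{8}w(0,t/4)>0$, which is (\ref{reguxB}). The main technical hurdle is to justify that the $4$D regularity machinery really applies \emph{across} the center $y=0$, where a priori $W$ is only a bounded classical solution on $B_1\setminus\{0\}$. One route is a removable-singularity argument: the singular set $\{0\}\times(0,T/4]$ has codimension $4$ in space-time (zero parabolic capacity), so a bounded solution on its complement automatically extends to a weak (hence, by bootstrapping, classical) solution on all of $B_1\times(0,T/4]$. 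Alternatively, one may work at the level of the nondegenerate approximations $u^\eps$ (with $(x+\eps)u_{xx}$) used in \cite{BKLN1} to construct $u$: each $w^\eps$ satisfies a genuinely smooth $4$D problem, and the required parabolic estimates can be obtained uniformly in $\eps$ and passed to the limit.
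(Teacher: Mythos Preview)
Your approach is correct and uses the same key transformation to the four-dimensional problem (\ref{pbmw1})--(\ref{pbmw4}), but the logical route is genuinely different from the paper's, and the difference is instructive.

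You go in the direction $u\mapsto w$: starting from the given solution $u$, you define $w$ (hence $W$), note that $W$ is a bounded classical solution on $(B_1\setminus\{0\})\times(0,T/4]$, and then need to upgrade regularity \emph{across} the origin. You rightly flag this as the main hurdle and propose a removable-singularity argument (the line $\{0\}\times(0,T/4]$ has zero parabolic capacity in spatial dimension $4$) or a limit from the $\varepsilon$-regularizations. Both are workable, though the former needs a precise citation and the latter some bookkeeping.

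The paper goes the other direction, $w\mapsto u$, and thereby sidesteps the removable-singularity issue altogether. It first invokes local well-posedness of (\ref{pbmw1})--(\ref{pbmw4}) in $L^\infty$ (standard, since the nonlinearity has linear gradient growth) starting from $w_0(r)=8r^{-2}u_0(r^2)\in L^\infty$ (this uses (\ref{hypIDb})). This yields a maximal classical solution $w\in C^{2,1}([0,1]\times(0,T_m))$, smooth up to $r=0$ by construction, with the blow-up alternative $T_m<\infty\Rightarrow\|w(t)\|_\infty\to\infty$. Setting $\tilde u(x,t)=(x/8)\,w(\sqrt{x},t/4)$, one checks that $\tilde u$ satisfies the regularity hypotheses of the comparison principle (Proposition~\ref{PropComp}), hence $\tilde u=u$ on their common interval of existence. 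The hypothesis (\ref{hypBddSlope}) then rules out blow-up of $w$ before time $T/4$, so $T_m>T/4$ and the $C^1$ regularity of $u$ is inherited from $w$. In short: rather than proving regularity of the $w$ derived from $u$, the paper constructs a regular $w$ independently and identifies the induced $\tilde u$ with $u$ via uniqueness. This trades the removable-singularity step for an application of Proposition~\ref{PropComp}, which is already available in the paper.
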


Note that the assumption (\ref{hypBddSlope}) (for all $T>0$)
will be shown in Section~4 (see Lemma~4.3(ii)), as a consequence of our main
upper solution construction, hence leading to the global $C^1$ regularity property (\ref{reguxThm}) in Theorem~\ref{Theorem 1}.

\medskip

{\it Proof of Lemma~\ref{LemC1}.} Since the semilinear parabolic equation in (\ref{pbmw1}) has only linear growth with respect to the gradient, standard arguments based on the variation-of-constants formula (see, e.g.,~\cite[Example~51.30]{QS},
or \cite[p.~889]{CW}) show that problem 
(\ref{pbmw1})-(\ref{pbmw4}) is locally well-posed in the space of (radial, nonnegative) 
$L^\infty$ functions.
More precisely, for any $0\leq w_0\in L^\infty(0,1)$, there exists a unique, maximal (radial, nonnegative) classical solution $w$ of (\ref{pbmw1})-(\ref{pbmw4}), with
$w\in C^{2,1}\bigl([0,1]\times (0,T_m)\bigr)$ and
$w\in C\bigl([0,T_m);L^q(0,1)\bigr)$ for all finite $q\leq 1$.
Moreover,
\be\label{BUaltw}
T_m<\infty\Longrightarrow \lim_{t\to T_m}\|w(t)\|_\infty=\infty.
\ee

Now, if $u_0$ satisfies (\ref{hypIDa})-(\ref{hypIDb}), then
$w_0(r):=8r^{-2}u_0(r^2)$ verifies $0\leq w_0\in L^\infty(0,1)$. 
Denote by $w$ the corresponding maximal solution of 
(\ref{pbmw1})-(\ref{pbmw4}) and let
$$\tilde u(x,t)={x\over 8}w(\sqrt{x},t/4),\qquad 0\leq x\leq 1,\ 0\leq t<4T_m.$$
We see that $\tilde u$ satisfies the regularity conditions in (\ref{reguAcomp}), (\ref{reguBcomp}) and
(\ref{reguC2comp}) with $\tau=4T_m$. Also, since
$$\tilde u_x(x,t)={1\over 8}w(\sqrt{x},t/4)+{\sqrt{x}\over 16}\,w_r(\sqrt{x},t/4),
\qquad 0<x\leq 1,\ 0<t<4T_m,$$
we have 
\be\label{regulux}
\tilde u\in C^{1,0}([0,1]\times (0,4T_m)),
\ee
hence in particular (\ref{reguCcomp}) with $\tau=4T_m$.
Then one easily checks that $\tilde u$ solves (\ref{pbmu1})-(\ref{pbmu3}) on $[0,4T_m)$,
along with (\ref{reguD4}).
We may thus apply Proposition~\ref{PropComp} to deduce that
$\tilde u=u$ on $(0,T_0)$ with $T_0=\min(T,4T_m)$. 

We claim that $T_m>T/4$.
Indeed, if $T_m\leq T/4$, then (\ref{BUaltw}) implies
$$\lim_{t\to 4T_m} \ \ \sup_{0<x<1}{u(x,t)\over x}=\infty,$$ 
contradicting (\ref{hypBddSlope}).
Consequently, property (\ref{reguxA}) follows from (\ref{regulux}).
Moreover, $w>0$ in $[0,1]\times (0,T_m)$ by the strong maximum principle, 
which readily implies (\ref{reguxB}). \fin

\subsection{Small time estimates.}
Throughout the paper, we denote by ${\mathcal P}$ the parabolic operator defined by
\be\label{DefOperP}
{\mathcal P}v:=v_t-xv_{xx}-2vv_x.
\ee

The following two lemmas will be useful to initialize the comparison between $u$ 
and the main lower/upper solutions constructed in Section~4.

\begin{lem}
\label{LemInit1} 
Let $u_0$ satisfy (\ref{hypIDa}) and (\ref{hypIDb})
and denote by $u$ the global solution of problem (\ref{pbmu1})-(\ref{pbmu4}).
Then there exist $\tau,\eta>0$ such that
\be\label{Est1Init1}
u(x,t)\leq 2Kx,\qquad 0\leq x\leq 1,\ 0\leq t\leq\tau,
\ee
and
$$u(x,\tau)\leq 1-\eta(1-x),\qquad 0\leq x\leq 1.$$
\end{lem}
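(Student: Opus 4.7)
The plan is to establish the two estimates in sequence. For the first, I would compare $u$ with the explicit function $\bar u(x, t) := Kx/(1-2Kt)$, which is smooth on $[0,1] \times [0, 1/(2K))$. A direct computation shows that $\bar u$ is itself a classical solution of the PDE, since $\bar u_t = 2K^2 x/(1-2Kt)^2 = 2\bar u\,\bar u_x$ and $\bar u_{xx} \equiv 0$. Since $u_0(x) \le Kx = \bar u(x, 0)$ by (\ref{hypIDb}), $\bar u(0, t) = 0 = u(0,t)$, and $\bar u(1, t) = K/(1-2Kt) \ge K \ge 1 = u(1, t)$, the comparison principle (Proposition~\ref{PropComp}) yields $u \le \bar u$. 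Choosing $\tau := 1/(4K)$, so that $1 - 2K\tau = 1/2$, delivers the first estimate $u(x, t) \le 2Kx$ on $[0, 1] \times [0, \tau]$.

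For the second inequality, my strategy is to combine the strong maximum principle, Hopf's lemma at $x = 1$, and the $C^1$ regularity up to the boundary provided by Lemma~\ref{LemC1}. The bound just obtained verifies hypothesis (\ref{hypBddSlope}) of Lemma~\ref{LemC1} on $[0, \tau]$, yielding $u_x \in C([0, 1] \times (0, \tau])$. Next, since the equation is uniformly parabolic on $[\eps, 1] \times (0, \tau]$ for every $\eps > 0$, the strong maximum principle together with $u(0, t) = 0$ forces $u(x, \tau) < 1$ for all $x \in [0, 1)$. Hopf's lemma at $x = 1$, where $u$ attains its maximum value $1$, then gives $u_x(1, \tau) > 0$. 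I would next consider the quotient $f(x) := (1 - u(x, \tau))/(1 - x)$ on $[0, 1)$, which is continuous and strictly positive; by l'H\^opital's rule together with the continuity of $u_x$ at $x = 1$, $\lim_{x \to 1^-} f(x) = u_x(1, \tau) > 0$, so $f$ extends continuously and strictly positively to the compact interval $[0, 1]$. Setting $\eta := \inf_{[0, 1]} f > 0$ then immediately gives the second estimate.

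The construction of the supersolution in the first step is elementary. The only real subtlety I anticipate is the use of Hopf's lemma in the second step, which requires the $C^1$ regularity of $u$ up to $x = 1$---this is precisely the content of Lemma~\ref{LemC1}, whose hypothesis is secured by the first step. The piecewise nature of the lower bound on $1 - u(\cdot,\tau)$ (coming from strong positivity in the interior on one hand and Hopf at the right endpoint on the other) is unified cleanly by the single quotient $f$.
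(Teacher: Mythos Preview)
Your proof is correct and, for the first estimate, coincides with the paper's: the same explicit supersolution $\bar u(x,t)=Kx/(1-2Kt)$ and the same choice $\tau=1/(4K)$.

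For the second estimate the paper takes a slightly shorter route. Instead of the strong maximum principle and the quotient/l'H\^opital device, it uses directly the monotonicity of $u(\cdot,\tau)$ in $x$ (cf.~(\ref{reguC})): once Hopf's lemma gives $u_x(1,\tau)>0$, one has $u(x,\tau)\le 1-c(1-x)$ near $x=1$, and monotonicity then propagates a (possibly smaller) linear gap to all of $[0,1]$. Your argument via the continuous positive extension of $f(x)=(1-u(x,\tau))/(1-x)$ is perfectly valid, just less economical. One remark: invoking Lemma~\ref{LemC1} is unnecessary here, since the regularity $u\in C^{2,1}((0,1]\times(0,\infty))$ in (\ref{reguB}) already provides the $C^1$ (indeed $C^2$) smoothness up to $x=1$ needed for Hopf's lemma; Lemma~\ref{LemC1} concerns only the degenerate endpoint $x=0$.
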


{\it Proof.} Define 
$$\overline v(x,t)={Kx\over 1-2Kt},\qquad 0\leq x\leq 1,\ 0\leq t<1/2K.$$ 
Since
$${\mathcal P}\overline v={2K^2x\over (1-2Kt)^2}-{2K^2x\over (1-2Kt)^2}=0$$
and $\overline v(1,t)\geq K\geq 1$, the comparison principle guarantees that
$$u(x,t)\leq \overline v(x,t)\leq 2Kx,\qquad 0\leq x\leq 1,\ 0<t\leq 1/4K.$$ 
Fix $\tau=1/4K$. By Hopf's lemma, we have $u_x(1,\tau)>0$.
Since $u(\cdot,\tau)$ is nondecreasing in $x$, this implies 
$u(x,\tau)\leq 1-\eta(1-x)$, $0\leq x\leq 1$, for $\eta>0$ sufficiently small and the conclusion follows.  \fin

\bigskip

\begin{lem}
\label{LemInit2} 
Let $u_0$ satisfy (\ref{hypIDa}) and (\ref{hypIDb})
and denote by $u$ the global solution of problem (\ref{pbmu1})-(\ref{pbmu4}).
For any given $\delta\in (0,1)$,
there exists $T_\delta>0$ such that
$$u(x,T_\delta)\geq \min(1-\delta,x/\delta),\qquad 0\leq x\leq 1.$$
\end{lem}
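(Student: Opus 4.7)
The plan is to reduce the statement to a comparison with a single stationary profile of the form $U_a(x)=ax/(ax+1)$. A direct computation shows that, with $a=1/\delta^2$,
\[ U_{1/\delta^2}(x) = \frac{x}{x+\delta^2} \geq \min(1-\delta,\,x/\delta) \quad \text{on } [0,1], \]
with equality precisely at $x=0$ and $x=\delta(1-\delta)$ (the inequality $x/(x+\delta^2)\geq x/\delta$ reduces to $x\leq \delta(1-\delta)$, and $x/(x+\delta^2)\geq 1-\delta$ to its opposite). Hence it suffices to exhibit some $T_\delta$ with $u(\cdot,T_\delta)\geq U_{1/\delta^2}(\cdot)$ on $[0,1]$, up to a small uniform error that is still compatible with the tent bound $\min(1-\delta,\cdot/\delta)$.

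To produce such a comparison, I would introduce the auxiliary subcritical problem: same equation (\ref{pbmu1}) and boundary condition at $x=0$, but with $\tilde u(1,t)=\xi:=2/(2+\delta^2)<1$ (so that $\tilde a:=\xi/(1-\xi)=2/\delta^2$) and initial data $\tilde u_0:=\xi u_0\leq u_0$. By Proposition~\ref{PropComp}, $\tilde u\leq u$ for all $t\geq 0$. Since the problem for $\tilde u$ is strictly subcritical, the convergence result recalled near (\ref{conv}) gives $\tilde u(\cdot,T)\to U_{\tilde a}(\cdot)$ in $L^1(0,1)$ as $T\to\infty$; combined with the monotonicity of $\tilde u(\cdot,T)$ in $x$ (from (\ref{reguC})) and the continuity of the limit $U_{\tilde a}$, this upgrades to uniform convergence on $[0,1]$ by a classical Dini-type argument. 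Consequently, for $T=T_\delta$ large, $u(x,T_\delta)\geq \tilde u(x,T_\delta)\geq U_{\tilde a}(x)-\eta$ uniformly in $x$, with $\eta>0$ arbitrarily small.

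Away from the origin, the positive gap $U_{\tilde a}(x)-U_{1/\delta^2}(x)$ is bounded below by a positive constant on any $[x_1,1]$, so it absorbs the loss $\eta$ and delivers $u(x,T_\delta)\geq U_{1/\delta^2}(x)\geq\min(1-\delta,x/\delta)$ there. The delicate region is the sliver $[0,x_1]$, where the gap collapses like $O(x)$ and is swamped by the constant error $\eta$. I would close that sliver using Lemmas~\ref{LemC1} and~\ref{LemInit1}, which yield $u\in C^1([0,1]\times(0,T_\delta])$ with $u_x(0,\cdot)>0$: since $u\geq\tilde u$ and both vanish at $x=0$, one has $u_x(0,T_\delta)\geq \tilde u_x(0,T_\delta)$, and in the subcritical setting standard parabolic regularity upgrades the $L^1$ convergence $\tilde u\to U_{\tilde a}$ to $C^1$ convergence, so $\tilde u_x(0,T_\delta)\to U_{\tilde a}'(0)=2/\delta^2>1/\delta$. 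A Taylor expansion of $u(\cdot,T_\delta)$ at $x=0$ then yields $u(x,T_\delta)\geq x/\delta$ on a small right-neighborhood of the origin, completing the proof. The main obstacle I anticipate is the rigorous justification of this $C^1$-convergence for $\tilde u$, which probably requires bootstrapping from the uniform boundedness $\tilde u\leq\xi<1$ and gradient estimates available in the subcritical regime.
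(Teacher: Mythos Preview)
Your approach is genuinely different from the paper's, and the obstacle you flag at the end is a real gap, not a technicality.

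The paper does not introduce any auxiliary subcritical problem. It argues directly: Lemmas~\ref{LemInit1} and~\ref{LemC1} (the latter applied only on the short interval $[0,\tau]$, where the slope bound~\eqref{hypBddSlope} is furnished by~\eqref{Est1Init1}) give $u(x,\tau)\geq\eta x$ for some $\eta\in(0,1)$, and comparison with the subsolution $\eta x$ propagates this for all $t\geq\tau$. Then~\eqref{CvToOne} supplies $u\geq 1-\delta$ on $[(1-\delta)\delta,1]$ for $t\geq T$. On the truncated domain $[0,(1-\delta)\delta]$ the explicit function $\underline v(x,t)=\bigl(\eta+2\eta^2(t-T)\bigr)x$ is a subsolution of the PDE, dominated by $1-\delta\leq u$ at the right endpoint and by $u$ at $t=T$. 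Its slope grows linearly in $t$ and reaches $1/\delta$ at a finite time $T_\delta$, whence $u(x,T_\delta)\geq x/\delta$ on $[0,(1-\delta)\delta]$. This needs no convergence of derivatives and no subcritical theory.

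In your route, everything up to uniform convergence of $\tilde u(\cdot,T)$ to $U_{\tilde a}$ is fine, but the step $\tilde u_x(0,T)\to U_{\tilde a}'(0)=2/\delta^2$ is exactly what the paper and its references do not supply: the convergence recorded near~\eqref{conv} is only in $L^p$ (and in $L^\infty$ \emph{conditionally} on a global gradient bound), not in $C^1$ up to the degenerate boundary $x=0$. Note also that Lemmas~\ref{LemC1} and~\ref{LemInit1} do \emph{not} yield $u\in C^1([0,1]\times(0,T_\delta])$ as you write: Lemma~\ref{LemInit1} provides~\eqref{hypBddSlope} only on $[0,\tau]$, and its extension to all finite $T$ is obtained only later, via the supersolution of Section~4. (You do not actually need $u\in C^1$---$\tilde u\in C^1$ near $x=0$ with $\tilde u_x(0,T_\delta)>1/\delta$ would suffice, since $u\geq\tilde u$---but establishing the latter as $T_\delta\to\infty$ is precisely the missing $C^1$ convergence.) Filling this would require a separate regularity analysis of the subcritical problem at the degenerate endpoint; the paper's direct growing-slope subsolution on $[0,(1-\delta)\delta]$ sidesteps all of it.
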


\medskip

{\it Proof.}  By Lemma~\ref{LemC1}, we may fix a small $\tau>0$
such that $u_x(\cdot,\tau)\in C([0,1])$ and $u_x(0,\tau)>0$.
Therefore $u(x,\tau)\geq \eta x$ for all $x\in [0,1]$ and
some $\eta\in (0,1)$.
Since ${\mathcal P}[\eta x]\leq 0$ and $\eta<1=u(1,t)$, the comparison principle implies that
\be\label{EqlemA} 
u(x,t)\geq \eta x,\qquad 0\leq x\leq 1,\ t\geq\tau.
\ee
On the other hand, by (\ref{CvToOne}), there exists $T>\tau$ such that 
\be\label{EqlemB}
u(x,t)\geq 1-\delta,\qquad (1-\delta)\delta\leq x\leq 1,\ t\geq T.
\ee
Define 
\be\label{EqlemC}
\underline v(x,t)=\bigl(\eta+2\eta^2(t-T)\bigr)x,\qquad 0\leq x\leq 1,\ t\geq T.
\ee
We have
\be\label{EqlemD}
{\mathcal P}\underline v=2\eta^2x-2\bigl(\eta+2\eta^2(t-T)\bigr)^2x\leq 0
\ee
and, due to (\ref{EqlemA}),
\be\label{EqlemE}
u(x,T)\geq \underline v(x,T),\qquad 0\leq x\leq 1.
\ee
Since $\eta<1<1/\delta$, we may find $T_\delta>T$ such that 
\be\label{EqlemF}
\eta+2\eta^2(T_\delta-T)=1/\delta.
\ee
In view of (\ref{EqlemB}), (\ref{EqlemC}) and (\ref{EqlemF}), we have
\be\label{EqlemG}
u\bigl((1-\delta)\delta,t\bigr)\geq 1-\delta\geq \underline v\bigl((1-\delta)\delta,t\bigr) ,\qquad T\leq t\leq T_\delta.
\ee
It then follows from (\ref{EqlemD}), (\ref{EqlemE}), (\ref{EqlemG}) and the comparison principle that
$$u(x,T_\delta)\geq x/\delta,\qquad 0\leq x\leq (1-\delta)\delta.$$
Using ${\mathcal P}[x/\delta]\leq 0$ and (\ref{EqlemB}), we deduce that
$$u(x,t)\geq x/\delta,\qquad 0\leq x\leq (1-\delta)\delta,\ t\geq T_\delta.$$
This combined with (\ref{EqlemB}) yields the desired conclusion.  \fin

\subsection{ODE lemmas.}
We consider the differential operator
$${\mathcal L}w:=yw''+{2yw'\over 1+y}+{2w\over (1+y)^2}.$$
First, the expression for the operator ${\mathcal L}$ reads
$${\mathcal L}w=\Bigl[\Bigl({y\over y+1}\Bigr)^2\Bigl({(y+1)^2\over y} w\Bigr)'\Bigr]'.$$
If $\psi\in C([0,\infty))$ satisfies $\psi(y)=O(y)$ as $y\to 0$, then the problem
$$
\alignedat2
{\mathcal L}w&=\psi,\quad y>0,\\
w(0)&=0,\ w'(0)=0
\endalignedat
$$
admits a unique solution $w$, and $w$ can be represented as
$$w(y)={\mathcal L}_0^{-1}\psi:={y\over (y+1)^2}\int_0^y\left({t+1\over t}\right)^2\int_0^t\psi(s)ds\,dt$$
(note that the integral is convergent due to the assumption on $\psi$).
In particular,

\be\label{PreservePos}
\psi\geq 0 \text{ on }[0,\infty)\
 \Longrightarrow 
 {\mathcal L}_0^{-1}\psi\geq 0 \text{ on }[0,\infty).
\ee
On the other hand, $w_0(y)={y\over (y+1)^2}$ solves
$$
\alignedat2
{\mathcal L}w&=0,\quad y>0,\\
w(0)&=0,\ w'(0)=1.
\endalignedat
$$

In the next section, to construct our main upper and lower solutions, we will need to know the asymptotic behaviour of the action of the operator $\mathcal{L}_0$ on some particular functions as $y\to \infty.$ 
More precisely,
let us define
$$f= \bigr(I+{\mathcal L}_0^{-1})\Bigl({y\over (y+1)^2}\Bigr),$$
$$\tilde f=2ff'-yf'+f,$$
$$g={\mathcal L}_0^{-1}\tilde f$$
and 
$$h={\mathcal L}_0^{-1}(\tilde f+M\varphi)
=g+M{\mathcal L}_0^{-1}\varphi,$$
where $M>0$ and
$$\varphi\in C^1([0,\infty)),\qquad \varphi(0)=0,\qquad \varphi(y)=
\frac{1}{\log y},\ y\geq 2.$$
We have the following lemmas.

\begin{lem} \label{lem1}
As $y\to\infty$, the function $f$ satisfies
$$f(y)=\log y-2+O\Bigl(\frac{\log^2 y}{y}\Bigr);
\leqno{\rm (i)}$$
$$f'(y)=\frac{1}{y}+O\Bigl(\frac{\log^2 y}{y^2}\Bigr).
\leqno{\rm (ii)}$$
\end{lem}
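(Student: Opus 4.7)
The plan is to derive an explicit integral representation for $f$ and then extract its asymptotics term by term. Since $w_0(y)=y/(y+1)^2$, the definition $f = (I + \mathcal{L}_0^{-1})w_0$ together with the formula for $\mathcal{L}_0^{-1}$ yields
$$f(y) = \frac{y}{(y+1)^2}\bigl(1 + J(y)\bigr),\quad J(y) := \int_0^y \Bigl(\frac{t+1}{t}\Bigr)^2 I_1(t)\,dt,\quad I_1(t) := \int_0^t \frac{s}{(s+1)^2}\,ds.$$
By partial fractions, $I_1(t) = \log(t+1) + \frac{1}{t+1} - 1$, which vanishes like $t^2/2$ at the origin, so the apparent $1/t$ singularity in the integrand of $J$ cancels and $J$ is well defined.

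For (i), I would expand, for large $t$,
$$\Bigl(\frac{t+1}{t}\Bigr)^2 I_1(t) = \Bigl(1 + \frac{2}{t} + \frac{1}{t^2}\Bigr)\Bigl(\log t - 1 + \frac{2}{t} + O(t^{-2})\Bigr) = \log t - 1 + \frac{2\log t}{t} + O\Bigl(\frac{\log t}{t^2}\Bigr),$$
the $\pm 2/t$ contributions cancelling. Integrating from a fixed $T_0 > 0$ to $y$ and absorbing $\int_0^{T_0}$ into a constant $C$ gives $J(y) = y\log y - 2y + (\log y)^2 + C + O((\log y)/y)$. Now using $y/(y+1)^2 = 1/y - 2/y^2 + O(1/y^3)$, the products $(1/y)(y\log y) = \log y$ and $(1/y)(-2y) = -2$ produce the leading behaviour, while the next-largest contribution is $(1/y)(\log y)^2 = O((\log y)^2/y)$; all remaining terms, including $C/y$, are also $O((\log y)^2/y)$. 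Hence (i) holds without our needing to evaluate $C$.

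For (ii), I would differentiate the representation. Setting $G(y) := (y+1)^2 f(y)/y = 1+J(y)$, the factorized form of $\mathcal{L}$ gives $G'(y) = ((y+1)/y)^2 I_1(y)$, and using $(d/dy)[y/(y+1)^2] = (1-y)/(y+1)^3$ one obtains
$$f'(y) = \frac{(1-y)\,G(y)}{(y+1)^3} + \frac{I_1(y)}{y}.$$
Expanding with $(1-y)/(y+1)^3 = -1/y^2 + 4/y^3 + O(1/y^4)$, the $\log y/y$ contributions cancel ($-\log y/y$ from the first summand against $+\log y/y$ from $I_1(y)/y$), while the $1/y$ contributions combine into $2/y - 1/y = 1/y$, leaving $f'(y) = 1/y + O((\log y)^2/y^2)$. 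The main obstacle is the bookkeeping: both parts rely on cancellations that only become visible once the asymptotic expansions are carried to sufficient order, and one must verify that the undetermined constant $C$ never disturbs the leading behaviour.
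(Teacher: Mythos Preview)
Your proposal is correct and follows essentially the same route as the paper: both use the explicit integral representation $f(y)=\frac{y}{(y+1)^2}\bigl(1+J(y)\bigr)$ with the inner integral $I_1(t)=\log(t+1)-\frac{t}{t+1}$, expand the integrand for large $t$, and integrate to get $J(y)=y\log y-2y+O(\log^2 y)$, which yields (i). For (ii) the paper differentiates in the equivalent form $f'(y)=\bigl[\tfrac{1}{y}-\tfrac{2}{y+1}\bigr]f(y)+\tfrac{1}{y}I_1(y)$ and substitutes the already-proven expansion (i) for $f$, whereas you keep the factor $G(y)=1+J(y)$ explicit; the two computations are the same up to the identity $\bigl[\tfrac{1}{y}-\tfrac{2}{y+1}\bigr]f(y)=\tfrac{(1-y)G(y)}{(y+1)^3}$, and the cancellations you track are exactly those occurring in the paper's calculation.
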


\begin{lem}\label{lem1b}
As $y\to\infty$, the functions $g$ and $h$ satisfy
$$g(y)=\frac{y\log y}{2}-\frac{9y}{4}+O\bigl(\log^3 y\bigr);
\leqno{\rm (i)}$$
$$g'(y)=\frac{\log y}{2}-\frac{7}{4}+O\Bigl(\frac{\log^3 y}{y}\Bigr);
\leqno{\rm (ii)}$$
$$h(y)=\frac{y\log y}{2}-\frac{9y}{4}+O\Bigl(\frac{y}{\log y}\Bigr);
\leqno{\rm (iii)}$$
$$h'(y)=\frac{\log y}{2}-\frac{7}{4}+O\Bigl(\frac{1}{\log y}\Bigr).
\leqno{\rm (iv)}$$
\end{lem}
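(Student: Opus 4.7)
The plan is to apply the explicit formula for $\mathcal{L}_0^{-1}$ directly and carry out the iterated integrals asymptotically, keeping just enough precision to match the stated error orders. First, substituting the expansions of $f$ and $f'$ from Lemma~\ref{lem1} into the definition $\tilde f = 2ff' - yf' + f$ yields
\[
\tilde f(y) = \log y - 3 + O\Bigl(\frac{\log^2 y}{y}\Bigr), \qquad y\to\infty,
\]
the constant $-3$ reflecting the cancellation $f - yf' = (\log y - 2) - 1 + o(1)$ together with $2ff' = O(\log y/y)$.

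Writing $H(t) = \int_0^t \tilde f(s)\,ds$ and $G(y) = \int_0^y\bigl(\tfrac{t+1}{t}\bigr)^2 H(t)\,dt$, so that $g(y) = \frac{y}{(y+1)^2}G(y)$, I would split each integral as $\int_0^{y_0} + \int_{y_0}^y$ for some fixed large $y_0$, the first piece contributing only $O(1)$ constants absorbed by the final error. Direct integration (using $\int t\log t\,dt = \frac{t^2\log t}{2} - \frac{t^2}{4}$) gives $H(t) = t\log t - 4t + O(\log^3 t)$. Expanding $(1+1/t)^2 = 1 + 2/t + 1/t^2$ reduces $G(y)$ to a sum of elementary integrals whose dominant contribution is
\[
\int_{y_0}^y (t\log t - 4t)\,dt = \frac{y^2\log y}{2} - \frac{y^2}{4} - 2y^2 + O(1) = \frac{y^2\log y}{2} - \frac{9y^2}{4} + O(1),
\]
the cancellation $-\tfrac14 - 2 = -\tfrac94$ producing the constant in (i). The subleading pieces ($2y\log y$, $\log^2 y$, etc.) and the propagated $O(\log^3 t)$ error in $H$ are all bounded by $O(y\log^3 y)$. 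Multiplying by $\frac{y}{(y+1)^2} = \frac{1}{y} + O(1/y^2)$ then yields (i). For (ii), I would use $g'(y) = \frac{1-y}{(y+1)^3}G(y) + \frac{H(y)}{y}$; the two terms contribute $-\frac{\log y}{2} + \frac{9}{4}$ and $\log y - 4$ respectively, summing to $\frac{\log y}{2} - \frac{7}{4}$ modulo $O(\log^3 y/y)$.

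For $h = g + M\mathcal{L}_0^{-1}\varphi$, I would estimate $\mathcal{L}_0^{-1}\varphi$ by the same scheme. Integration by parts gives $H_\varphi(t) = \int_0^t \varphi(s)\,ds = \frac{t}{\log t} + O(t/\log^2 t)$, and a second one gives $\int_2^y \frac{t\,dt}{\log t} = \frac{y^2}{2\log y} + O(y^2/\log^2 y)$, from which $\mathcal{L}_0^{-1}\varphi(y) = O(y/\log y)$ and $(\mathcal{L}_0^{-1}\varphi)'(y) = O(1/\log y)$. Since these absorb the $O(\log^3 y)$ and $O(\log^3 y/y)$ errors coming from $g$, claims (iii) and (iv) follow.

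The main obstacle is purely computational bookkeeping: tracking the precise constants $-9/4$ and $-7/4$, which emerge from delicate cancellations of subleading polynomial-in-$y$ terms, and ensuring that the logarithmic contributions generated by the weight $(t+1)^2/t^2 = 1 + 2/t + 1/t^2$ (notably $2y\log y$, $-10y$, $\log^2 y$, $\log\log y$) never exceed the declared error orders after the successive integrations.
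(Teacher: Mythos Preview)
Your proof is correct and follows essentially the same approach as the paper: both compute $\mathcal{L}_0^{-1}\tilde f$ and $\mathcal{L}_0^{-1}\varphi$ directly via the iterated integral formula, using the expansion $\tilde f(y)=\log y-3+O(\log^2 y/y)$ from Lemma~\ref{lem1}. The only difference is organizational---the paper first decomposes $\tilde f$ as $\log(y+1)-3f_2(y)+O(\log^2(y+1)/(y+1))$ (with $f_2$ a smooth cutoff to $1$) and treats each piece in a separate auxiliary lemma, whereas you carry the full expansion through the two integrations in one pass; the underlying computations and the resulting constants $-9/4$, $-7/4$ arise identically.
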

{\it Proof of Lemma~\ref{lem1}}\ {(i)\,\,} Using 
\be\label{DLlog}
\log (y+1)=\log y +O(1/y),\quad\mbox{as }y\to \infty,
\ee
we obtain
\begin{eqnarray*}
f(y)&=&\frac{y} {(y+1)^2}\left[1+\int_0^y\left({t+1\over t}\right)^2\int_0^t\left(\frac{1}{s+1}-\frac{1}{(s+1)^2}\right)ds\,dt\right]\\
&=&\frac{y}{(1+y)^2}\left[1+\int_0^y\left\{\left(1+\frac{2}{t}+\frac{1}{t^2}\right)
\left(\log(t+1)-\frac{t}{t+1}\right)\right\}dt \right]\\
&=&\frac{y}{(1+y)^2}\left[y\log y-2y+O\bigl(\log^2 y\bigr)\right]\\
&=&\log y-2+O\left(\frac{\log^2 y}{y}\right),\quad\mbox{as }y\to \infty.
\end{eqnarray*}
{(ii)\,\,} 
Now using $(i)$ we deduce
\begin{eqnarray*}
f'(y)&=&
\left[\frac{1}{y}-\frac{2}{y+1}\right]f(y)+\frac{1}{y}
\left(\log(y+1)-\frac{y}{y+1}\right)\\
&=&\left(-\frac{1} {y}+O\left(\frac{1}{y^2}\right)\right)\left(\log y-2+O\left(\frac{\log^2 y}{y}\right)\right)+
\frac{\log(y+1)}{y}-\frac{1}{y+1}\\
&=&\frac{1}{y}+O\left(\frac{\log^2 y}{y^2}\right),\quad\mbox{as }y\to \infty.
\end{eqnarray*}
\fin

To show Lemma~\ref{lem1b} we first note that, due to Lemma~\ref{lem1} and (\ref{DLlog}), we have
$$\tilde f(y)=f_1(y)-3f_2(y)+O(f_3(y)),\quad\mbox{as } y\to\infty,$$
where
$$f_1(y)=\log(y+1),\quad f_3(y)=\frac{\log^2(y+1)}{y+1}$$
and
$$f_2\in C^1([0,\infty)),\qquad f_2(0)=0,\qquad f_2(y)=1,\ y\geq 1.$$
Denote $g_i={\mathcal L}_0^{-1}f_i$, for $i=1,2,3$, and $g_4={\mathcal L}_0^{-1}\varphi$.
Lemma~\ref{lem1b} will then be an immediate consequence of:

\begin{lem}\label{lem1c}
As $y\to\infty$, the functions $g_i$ satisfy
$$g_1(y)=\frac{y\log y}{2}-\frac{3 y}{4}+O\bigl(\log y\bigr)
\quad\hbox{ and }\quad
g_1'(y)=\frac{\log y}{2}-\frac{1}{4}+O\Bigl(\frac{\log y}{y}\Bigr);
\leqno{\rm (i)}$$
$$g_2(y)=\frac{y}{2}+O(1)
\quad\hbox{ and }\quad
g_2'(y)=\frac{1}{2}+O\Bigl(\frac{1}{y}\Bigr);
\leqno{\rm (ii)}$$
$$g_3(y)=O\Bigl(\log^3 y\Bigr)
\quad\hbox{ and }\quad
g_3'(y)=O\Bigl(\frac{\log^3 y}{y}\Bigr);
\leqno{\rm (iii)}$$
$$g_4(y)=O\Bigl(\frac{y}{\log y}\Bigr)
\quad\hbox{ and }\quad
g_4'(y)=O\Bigl(\frac{1}{\log y}\Bigr).
\leqno{\rm (iv)}$$
\end{lem}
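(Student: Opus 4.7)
The proof will rely on direct asymptotic computation of the iterated integral that defines $\mathcal{L}_0^{-1}$. The plan is to write, for each function $\psi \in \{f_1, f_2, f_3, \varphi\}$,
\[
\mathcal{L}_0^{-1}\psi(y) = \frac{y}{(y+1)^2}\, F_\psi(y), \qquad F_\psi(y) = \int_0^y \Bigl(\tfrac{t+1}{t}\Bigr)^2 \Psi(t)\,dt, \qquad \Psi(t) = \int_0^t \psi(s)\,ds,
\]
expand the kernel as $\bigl(\tfrac{t+1}{t}\bigr)^2 = 1 + \tfrac{2}{t} + \tfrac{1}{t^2}$ and the outer factor as $\tfrac{y}{(y+1)^2} = \tfrac{1}{y} - \tfrac{2}{y^2} + O(y^{-3})$, and track the leading terms with controlled remainders. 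For the derivative I will use
\[
g_i'(y) = \Bigl(\tfrac{y}{(y+1)^2}\Bigr)'\! F_{\psi}(y) + \tfrac{y}{(y+1)^2}\Bigl(\tfrac{y+1}{y}\Bigr)^2 \Psi(y),
\]
where the two contributions will partially cancel to leave the predicted leading behaviour.

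For (i) I will compute $\Psi_1(t) = (t+1)\log(t+1) - t$ explicitly, then use $\log(t+1) = \log t + O(1/t)$ to expand $\int_0^y \Psi_1$, $\int_0^y \Psi_1(t)/t\,dt$ and $\int_0^y \Psi_1(t)/t^2\,dt$: the first yields $\tfrac{y^2\log y}{2} - \tfrac{3y^2}{4} + O(y\log y)$, the second $2y\log y - 4y + O(\log^2 y)$, the third $O(\log^2 y)$. Multiplying by the expansion of $\tfrac{y}{(y+1)^2}$ produces $g_1(y) = \tfrac{y\log y}{2} - \tfrac{3y}{4} + O(\log y)$, and the derivative formula gives the matching expression for $g_1'$. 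For (ii) the integrand $f_2$ equals $1$ for $y \geq 1$, so $\Psi_2(t) = t + c_0$ for $t \geq 1$ and the computation reduces to integrating powers of $t$; the constant correction from the compactly supported part of $f_2$ is absorbed in the $O(1)$ remainder. For (iii) and (iv), where only size estimates are needed, I will use the rough bounds $\Psi_3(t) = \tfrac{1}{3}\log^3(t+1) + O(\log^2 t)$ and $\Phi(t) \sim t/\log t$ (L'Hôpital), and then simply dominate each of the three pieces in the kernel expansion.

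The derivative asymptotics in each case come from the same decomposition: the boundary term $\tfrac{y}{(y+1)^2}\bigl(\tfrac{y+1}{y}\bigr)^2\Psi(y) = \Psi(y)/y + O(\Psi(y)/y^2)$ and the derivative of the outer factor, $\bigl(\tfrac{y}{(y+1)^2}\bigr)' = -\tfrac{1}{y^2} + O(y^{-3})$, acting on $F_\psi(y)$. For instance in (ii) these give $1$ and $-\tfrac{1}{2}$ respectively, summing to $\tfrac{1}{2}$; in (iv) they give $\tfrac{1}{\log y}$ and $-\tfrac{1}{2\log y}$, yielding the $O(1/\log y)$ bound.

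The main obstacle is purely a bookkeeping one: for (i) and (ii) the stated asymptotics are sharp enough that lower-order terms from the expansion of $\tfrac{y}{(y+1)^2}$ have to be combined with subleading terms in $F_\psi(y)$ before the announced remainder can be confirmed, and I must check carefully that the $O(\log y)$ error in (i) and the $O(1)$ error in (ii) are not exceeded by the interactions between these two expansions. For (iii) and (iv) the required orders of magnitude are generous, so it will suffice to bound each piece separately and apply the triangle inequality.
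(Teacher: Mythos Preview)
Your proposal is correct and follows essentially the same route as the paper: both write $g_i(y)=\frac{y}{(y+1)^2}F_\psi(y)$ with $F_\psi(y)=\int_0^y\bigl(\frac{t+1}{t}\bigr)^2\Psi(t)\,dt$, expand the kernel and outer factor, and integrate term by term. The only cosmetic difference is in the derivative step: the paper packages the first contribution as $\bigl[\frac{1}{y}-\frac{2}{y+1}\bigr]g_i(y)$ and plugs in the already-obtained asymptotics of $g_i$, whereas you keep it as $\bigl(\frac{y}{(y+1)^2}\bigr)'F_\psi(y)$ and re-expand $F_\psi$; the two formulations are algebraically identical and lead to the same computations.
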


\begin{proof} {(i)\,\,}  As $y\to \infty$, we have
\begin{eqnarray*}
g_1(y)&=&\frac{y} {(y+1)^2}\int_0^y\left({t+1\over t}\right)^2\bigl((t+1)\log (t+1)-t\bigr)dt\\
&=&\frac{y}{(1+y)^2}\int_0^y\bigl[(t+1)\log (t+1)-t+O\left(\log (t+1)\right)\bigr]dt\\
&=&\left[\frac{1}{y}+O\left(\frac{1}{y^2}\right)\right]\left[\frac{y^2\log y}{2}-\frac{3 y^2}{4}+O\left(y\log y\right)\right]\\
&=&\frac{y\log y}{2}-\frac{3 y}{4}+O\left(\log y\right),
\end{eqnarray*}
hence
\begin{eqnarray*}
g_1'(y)&=&
\left[\frac{1}{y}-\frac{2}{y+1}\right]g_1(y)
+\frac{1}{y}\int_0^y \log (s+1) ds\\
&=&\left(-\frac{1} {y}+O\left(\frac{1}{y^2}\right)\right)\left(\frac{y\log y}{2}-\frac{3 y}{4}
+O\left(\log y\right)\right)+\frac{(y+1)\log(y+1)-y}{y}\\
&=&\frac{\log y}{2}-\frac{1}{4}+O\left(\frac{\log y}{y}\right).
\end{eqnarray*}
\item {(ii)\,\,} 
Due to the definition of $g_2$ 
there exist constants $C_1,C_2\in\R$ such that, for $y>1$,
\begin{eqnarray*}
g_2(y)
&=&\frac{y}{(y+1)^2}
\left[ \int_0^1\left({t+1\over t}\right)^2 \int_{0}^{t} f_2(s) ds\,dt
+\int_1^y\left({t+1\over t}\right)^2(C_1+t) dt\right]\\
&=&\frac{y}{(y+1)^2}
\left[C_2+\frac{y^2}{2}+O(y)\right]=\frac{y}{2}+O(1),\quad\mbox{as }y\to \infty.
\end{eqnarray*}
Therefore, for $y>1$,
\begin{eqnarray*}
g_2'(y)
&=&\left[\frac{1}{y}-\frac{2}{y+1}\right]g_2(y)
+\frac{y}{(y+1)^2}\left({y+1\over y}\right)^2 (C_1+y)\\
&=&
\left[\frac{1}{y}-\frac{2}{y+1}\right]g_2(y)
+1+O\left(\frac{1}{y}\right)
=\frac{1}{2}+O\left(\frac{1}{y}\right),\quad\mbox{as }y\to \infty.
\end{eqnarray*}
\item{(iii) \,\,} 
As $y\to \infty$, we have
\begin{eqnarray*}
g_3(y)&=&\frac{y} {3(y+1)^2}\int_0^y\left({t+1\over t}\right)^2\log^3 (t+1)dt\\
&=&\frac{y}{3(1+y)^2}\int_0^y O\left(\log^3 (t+1)\right)dt\\
&=&\frac{y}{3(1+y)^2} \,O\left((y+1)\log^3 (y+1)\right)=O\left(\log^3 y\right),
\end{eqnarray*}
hence
\begin{eqnarray*}
g_3'(y)&=&
\left[\frac{1}{y}-\frac{2}{y+1}\right]g_3(y)
+\frac{1}{y}\int_0^y\frac{\log^2(t+1)}{(t+1)}dt\\
&=&\left(-\frac{1} {y}+O\left(\frac{1}{y^2}\right)\right)O\left(\log^3 y\right)+\frac{\log^3 (y+1)}{3y}
=O\left(\frac{\log^3 y}{y}\right).
\end{eqnarray*}
\item{(iv) \,\,}
Similarly to (ii) in this case we have, for $y>2$,
\begin{eqnarray*}
g_4(y)
&=&\frac{y}{(y+1)^2}
\left[ \int_0^2\left({t+1\over t}\right)^2 \int_{0}^{t}\varphi(s) ds\,dt
+\int_2^y\left({t+1\over t}\right)^2\left(C_1+\int_{2}^{t}\frac{ds}{\log s}\right) dt\right]\\
&=&\frac{y}{(y+1)^2}
\left[C_2+ \int_2^y\left({t+1\over t}\right)^2 O\left(\frac{t}{\log t}\right) dt\right] \\
&=&\frac{y}{(y+1)^2} O\left(\frac{y^2}{\log y}\right)
=O\left(\frac{y}{\log y}\right),\quad\mbox{as }y\to \infty.
\end{eqnarray*}
Therefore, for $y>2$,
$$g_4'(y)
=\left[\frac{1}{y}-\frac{2}{y+1}\right]g_4(y)
+\frac{y}{(y+1)^2}\left({y+1\over y}\right)^2
\left(C_1+\int_{2}^{y}\frac{ds}{\log s}\right)
=O\left(\frac{1}{\log y}\right),\quad\mbox{as }y\to \infty.
$$
\end{proof}

\section{Proof of the main results}

\subsection{Construction of a lower solution.}
Motivated by the idea of an asymptotic expansion around (moving) steady states and based on a self-similar variable, 
see Subsection~1.3, we make the
following Ansatz:
$$\underline u(x,t)=1-{1\over y+1}+b(t)f(y)-b^2(t)g(y), \qquad y=a(t)x,$$
where the functions $a, b, f, g$ have to be determined.
Note that the variable $y$ now ranges into the time-dependent interval
$[0,a(t)]$.
Here, $a$ and $b$ are expected to satisfy 
$$a(t)\sim u_x(0,t),
\quad \lim_{t\to\infty}a(t)=\infty
\quad\hbox{ and }\quad\lim_{t\to\infty}b(t)=0.$$

\begin{lem}\label{LemMain1}
The problem
\bgee
\underline u_t-x\underline u_{xx}&\leq& 2\underline u{\hskip 1pt}\underline u_x,\quad 0<x<1,\ t>0,\\
\underline u(0,t)&=&0,\quad t\geq 0,\\
\underline u(1,t)&<&1,\quad t\geq 0,\\
\egee
admits a solution of the form
\be\label{AnsatzUnderu}
\underline u(x,t)=1-{1\over y+1}+b(t)f(y)-b^2(t)g(y), \qquad y=a(t)x,
\ee
where the smooth functions $a>0$, $b>0$, $f\geq 0$ and $g$ have the following properties:
$$f(y)\sim \log y,\quad g(y)\sim{y\log y\over 2},\quad\mbox{as }y\to\infty,$$
\be\label{fgLemMain1}
f(0)=g(0)=f'(0)=g'(0)=0,
\ee
\be\label{AsymptbLemA}
a(t)=\Bigl(1+O\bigl(t^{-1/2}\log t\bigr)\Bigr)
\exp\left[\frac{5}{2}+\sqrt{2t}\right],\quad\mbox{as }t\to\infty,
\ee 
\be\label{AsymptbLemB}
b(t)={1+O\bigl(t^{-1/2}\bigr)\over a(t)\log a(t)},\quad\mbox{as }t\to\infty,
\ee 
and moreover
\be\label{underuxpos}
\underline u_x>0,\quad 0\leq x\leq 1,\ t\geq 0.
\ee
\end{lem}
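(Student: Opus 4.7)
My plan is to verify that the Ansatz
\[
\underline u(x,t)=1-\frac{1}{y+1}+b(t)f(y)-b^2(t)g(y),\qquad y=a(t)x,
\]
with $f,g$ as defined in Section~3.4, satisfies the required differential inequality and boundary conditions by a systematic matched expansion in the two small parameters $\alpha:=a'/a$ and $b$. First I would compute $\mathcal{P}\underline u=\underline u_t-x\underline u_{xx}-2\underline u\,\underline u_x$ term by term, using $x=y/a$. Because $U(y)=y/(y+1)$ obeys the stationary equation $yU''+2UU'=0$, all $O(1)$ terms cancel. Writing $\mathcal{L}w=yw''+2Uw'+2U'w$, the combined $O(\alpha)$ and $O(ab)$ contributions take the form $\alpha yU'-ab\,\mathcal{L}f$; since $f=(I+\mathcal{L}_0^{-1})(yU')$ satisfies $\mathcal{L}f=yU'$ (because $y/(y+1)^2$ lies in the kernel of $\mathcal{L}$), this block collapses to $(\alpha-ab)yU'$, which vanishes under $\alpha=ab$. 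The $O(ab^2)$ terms then involve $\mathcal{L}g$ and the nonlinear cross term $2ff'$ coming from $2\underline u\,\underline u_x$; the choice $g=\mathcal{L}_0^{-1}(2ff'-yf'+f)$ reduces this block to $ab^2(-yf'+f)+O(ab^3)$.

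Next I would fix $a(t),b(t)$ by matching at the right boundary $x=1$. Substituting the asymptotics $f(a)=\log a-2+O(\log^2 a/a)$ and $g(a)=\tfrac12 a\log a-\tfrac94 a+O(\log^3 a)$ from Lemmas~\ref{lem1}--\ref{lem1b} into $\underline u(1,t)=1-(a+1)^{-1}+bf(a)-b^2g(a)$ and expanding in powers of $1/\log a$ yields
\[
b=\frac{1}{a\log a}\Bigl(1+\tfrac{5}{2\log a}+O((\log a)^{-2})\Bigr).
\]
Combining with $\alpha=ab$ gives the ODE $(\log a)'=1/\log a+5/(2(\log a)^2)+\cdots$, whose integration produces $\log a=\sqrt{2t}+5/2+O(t^{-1/2}\log t)$, i.e.\ (\ref{AsymptbLemA})--(\ref{AsymptbLemB}). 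Choosing $b$ strictly below the exact matching value guarantees $\underline u(1,t)<1$, while $\underline u(0,t)=0$ is immediate from $U(0)=f(0)=g(0)=0$.

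To conclude, I would return to the residual $\mathcal{P}\underline u$, which after the cancellations above reduces to $(\dot b+ab^2)f-2b\dot b g+ab^2(-yf'+f)+O(ab^3)$. The ODE relations show each surviving piece is of order $ab^3$ uniformly in $y\in[0,a(t)]$, and the slight under-matching at the boundary ensures the dominant coefficient is negative, giving $\mathcal{P}\underline u\le 0$. Positivity $\underline u_x=a(U'+bf'-b^2g')>0$ follows because $U'(y)=(1+y)^{-2}$ dominates the corrections by Lemmas~\ref{lem1}(ii) and \ref{lem1b}(ii),(iv). Nonnegativity $f\ge 0$ is a consequence of (\ref{PreservePos}) applied to $\psi=yU'\ge 0$, and the remaining qualitative properties ($f(0)=g(0)=f'(0)=g'(0)=0$ and the stated asymptotics of $f,g$) are built into their definitions via $\mathcal{L}_0^{-1}$ or follow from Lemmas~\ref{lem1}--\ref{lem1b}.

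The step I expect to be the main obstacle is the uniform sign control of $\mathcal{P}\underline u$ on all of $[0,a(t)]$: both the inner region $y=O(1)$, where $U',f',g'$ are small and the signs of the residual terms are delicate, and the outer region $y\asymp a(t)$, where the expansions of Lemma~\ref{lem1b} are sharp only up to logarithmic factors, require the under-matching at the boundary to supply a negative leading coefficient of order $\asymp ab^3$ that beats every error term uniformly. Verifying this will likely require keeping track of several subleading terms in the expansions of $f,g,f',g'$, and carefully tuning the next correction in $b$ beyond the $5/(2\log a)$ term already identified.
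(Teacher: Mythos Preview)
Your overall architecture is exactly the paper's: same Ansatz, same $f=(I+\mathcal{L}_0^{-1})\bigl(y/(1+y)^2\bigr)$ and $g=\mathcal{L}_0^{-1}(2ff'-yf'+f)$, same relation $b=a'/a^2$, and the same determination of $a$ via matching at $x=1$ using the expansions of Lemmas~\ref{lem1}--\ref{lem1b}. The positivity argument for $\underline u_x$ is also fine. However, the step you yourself flag as the main obstacle---the sign of $\mathcal{P}\underline u$---contains a genuine gap, and the mechanism you propose does not work.

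First a bookkeeping correction. With $g$ chosen as above, the $O(ab^2)$ block does \emph{not} reduce to $ab^2(-yf'+f)$. The contributions $+yf'$ and $-f$ at order $ab^2$ come from $\underline u_t$ (namely $\tfrac{ba'}{a}yf'$ and the ``$-1$'' part of $b'f$), and together with $\mathcal{L}g-2ff'=-yf'+f$ they cancel completely, except for the piece of $b'f$ governed by $\gamma:=(a/a')'$. Using $b'=-(1+\gamma)ab^2$ one obtains exactly
\[
\frac{\mathcal{P}\underline u}{ab^2}\;=\;-\gamma f\;+\;b\bigl[2f'g+2fg'-yg'+2(1+\gamma)g\bigr]\;-\;2b^2gg'.
\]
Thus the only surviving $O(ab^2)$ term is $-\gamma f$ (your $(\dot b+ab^2)f$ is precisely this), and it is \emph{not} of order $ab^3$: since $\gamma\sim 1/\log a$ while $b\sim 1/(a\log a)$, one has $\gamma/b\sim a\to\infty$.

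Second, and more important, the negativity of $\mathcal{P}\underline u$ does \emph{not} come from ``under-matching at the boundary''. It comes from $\gamma>0$, which is automatic from the ODE $a'=\tfrac{a}{\log a}(1+\eta)$ (one gets $\gamma\sim 1/\log a$ for any value of the constant $K$ in $\eta$). The sign argument then splits: for $y_0\le y\le a(t)$ one has $-\gamma f\le -(1-\delta)\tfrac{\log y}{\log a}$, while the $b[\cdots]$ term is bounded by $(\tfrac12+O(\delta))\tfrac{y\log y}{a\log a}\le(\tfrac12+O(\delta))\tfrac{\log y}{\log a}$, so the sum is $\le 0$; for $0\le y\le y_0$ one uses $f(y)\ge y/(1+y)^2\ge c_1 y$ together with $\gamma/b\to\infty$ to absorb the $O(by)$ remainders. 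The boundary tweak (taking $K<21/4$) is used \emph{only} to enforce $\underline u(1,t)<1$; it plays no role whatsoever in the sign of $\mathcal{P}\underline u$. If instead you tried to manufacture negativity by decreasing $b$ below $a'/a^2$, you would reintroduce an uncancelled $O(ab)$ term $(\alpha-ab)\,y/(1+y)^2$ with the wrong sign.
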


{\it Proof.} {\it Step 1:} {\bf Construction of the subsolution.}
In what follows we shall omit the variables $t$ and/or $y$ when no confusion is likely.
We take $\underline u$ as in (\ref{AnsatzUnderu}) where we assume
\be\label{assum1a}
a>0\quad\hbox{ and }\quad \lim_{t\to\infty}a(t)=\infty.
\ee
We compute
\be\label{underux}
\underline u_x={a\over (1+y)^2}+baf'(y)-b^2ag'(y),
\ee
$$ 
\underline u_{xx}={-2a^2\over (1+y)^3}+ba^2f''(y)-b^2a^2g''(y),
$$
and
$$\underline u_t={a'x\over (1+y)^2}+\Bigl[b'f(y)+ba'xf'(y)-2bb'g(y)-b^2a'xg'(y)\Bigr],$$
hence
$$\underline u_t={a'\over a}{y\over (1+y)^2}
+\Bigl[b'f(y)+{ba'\over a}yf'(y)-2bb'g(y)-{b^2a'\over a}yg'(y)\Bigr].
$$
Recall that the operator ${\mathcal P}$ is defined in (\ref{DefOperP}). It follows that
$$
\aligned
{\mathcal P}\underline u
&={a'\over a}{y\over (1+y)^2}+\Bigl[b'f+{ba'\over a}yf'-2bb'g-{b^2a'\over a}\,yg'\Bigr]\\
&\qquad +2a\,{y\over (1+y)^3}-ba\,yf''+b^2a\,yg''
-2a \Bigl[{y\over 1+y}+bf-b^2g\Bigr] \Bigl[{1\over (1+y)^2}+bf'-b^2g' \Bigr].\\
\endaligned
$$
Collecting terms of same order in $b$ yields
\be\label{Punderu}
\alignedat2
{\mathcal P}\underline u
&=&{a'\over a}{y\over (1+y)^2}+\Bigl[b'f+{ba'\over a}yf'-2bb'g-{b^2a'\over a}\,yg'\Bigr]
-ab \Bigl[yf''+{2f\over (1+y)^2}+{2yf'\over 1+y}\Bigr]\\
&&\qquad+ab^2 \Bigl[yg''+{2g\over (1+y)^2}+{2yg'\over 1+y}-2ff'\Bigr]
+2ab^3 \Bigl[f'g+fg'\Bigr]-2ab^4gg'.
\endaligned
\ee

The natural scaling of the equation leads to the choice
\be\label{Defb}
b:={a'\over a^2},
\ee
so that in the RHS of (\ref{Punderu}), the first term will be of the same order as 
the terms in the second bracket.
Assuming
\be\label{assum2a}
a'>0,
\ee
we also denote 
\be\label{Defgamma}
\gamma:=\Bigl({a\over a'}\Bigr)'
\ee
and observe that
\be\label{Identab}
{a'\over a}=ab,\qquad b'=-(1+\gamma)ab^2,
\ee
where the last equality comes from
$$\gamma=\Bigl({a\over a'}\Bigr)'=\Bigl({1\over ab}\Bigr)'
=-{b'\over ab^2}-{a'\over a^2b}=-{b'\over ab^2}-1.$$
To make things clear, let us already stress that the final choice of $a$ will guarantee
$$\gamma\geq 0\quad\hbox{ and }\quad\lim_{t\to\infty}\gamma(t)=0.$$
Using (\ref{Identab}), identity (\ref{Punderu}) can be recast under the form:
\bgee
{\mathcal P}\underline u
&=&ab\Bigl[{y\over (1+y)^2}-yf''-{2f\over (1+y)^2}-{2yf'\over 1+y}\Bigr]
\\
&&\hskip 3cm
+ab^2 \Bigl[yg''+{2g\over (1+y)^2}+{2yg'\over 1+y}-2ff'
+yf'-(1+\gamma)f\Bigr]\\
&&\hskip 3cm
+ab^3 \Bigl[2f'g+2fg'-yg'+2(1+\gamma)g\Bigr]-2ab^4gg',
\egee
hence
\be\label{PunderuNew}
\alignedat2
{\mathcal P}\underline u
&=& ab\Bigl[{y\over (1+y)^2}-{\mathcal L}f\Bigr]
+ab^2 \Bigl[{\mathcal L}g-2ff'+yf'-(1+\gamma)f\Bigr]\\
&&\qquad\qquad+ab^3 \Bigl[2f'g+2fg'-yg'+2(1+\gamma)g\Bigr]-2ab^4gg'.
\endaligned
\ee
Let us now choose
\be\label{choicef}
f:=(I+{\mathcal L}_0^{-1})\left({y\over (1+y)^2}\right)\geq {y\over (1+y)^2}\geq 0,
\ee
which solves ${\mathcal L}f=y/(1+y)^2$ for $y>0$, with $f(0)=0$ and $f'(0)=1$ (cf.~Subsection~3.4). 
Dividing by $ab^2$, we see that ${\mathcal P}\underline u$ has the same sign as the quantity
$$A
:=\Bigl[{\mathcal L}g-2ff'+yf'-(1+\gamma)f\Bigr]
+b \Bigl[2f'g+2fg'-yg'+2(1+\gamma)g\Bigr]-2b^2gg'.
$$
Next we choose
$$g:={\mathcal L}_0^{-1}\bigl(2ff'-yf'+f\bigr).$$
Therefore
$$A=-\gamma f+b\Bigl[2f'g+2fg'-yg'+2(1+\gamma)g\Bigr]-2b^2gg'.$$

We now proceed to show that the quantity $A$ is nonpositive for large $t$,
by considering separately the
regions $y_0\leq y\leq a(t)$ and $0\leq y\leq y_0$, for some large $y_0$ independent of $t$.
At this point, we make the additional assumptions that
\be\label{equivgamma}
\gamma\sim{1\over \log a}, \quad\mbox{as } t\to\infty
\ee
and
\be\label{equivaprime}
a'\sim {a\over \log a},\quad\hbox{ hence}\quad
b\sim{1\over a\log a}, \quad\mbox{as } t\to\infty
\ee
(which will be verified on the final choice of the function $a$ -- actually more precise
expansions of $\gamma$ and $b$ will be needed in the final matching process).
By Lemmas~\ref{lem1} and \ref{lem1b}, we have
$$f\sim\log y,\ \ f'g=o(g),\ \ fg'=o(g)\ \hbox{ and }\ \ yg'\sim g\sim {y\log y\over 2}, \quad\hbox{as } y\to\infty.$$
Consequently, fixing $\delta>0$ and taking $y_0$ and $t_0$ large enough, we have, for $y_0\leq y\leq a(t)$ and $t\geq t_0$,
$$-\gamma f
\leq (-1+\delta){\log y\over \log a},\qquad g,g'\geq 0,$$
and
$$2f'g+2fg'-yg'+2(1+\gamma)g
\leq \delta\, y\log y-{1\over 2}y\log y
+2(1+\gamma)\Bigl({1\over 2}+\delta\Bigr)y\log y
\leq \Bigl({1\over 2}+4\delta\Bigr)y\log y.$$
Taking $\delta=1/12$ 
and also using (\ref{equivaprime}), we thus obtain
$$
A\leq (-1+\delta){\log y\over \log a}+{1\over a \log a} \Bigl({1\over 2}+5\delta\Bigr)y\log y
= {\log y\over \log a}\Bigl(-1+\delta+\Bigl({1\over 2}+5\delta\Bigr){y\over a}\Bigr)\leq 0
$$
for $y_0\leq y\leq a(t)$ and $t\geq t_0$ (possibly larger). 
Next, for $0\leq y\leq y_0$, (\ref{choicef}) implies 
$$f(y)\geq c_1y,\quad 0\leq y\leq y_0,$$
whereas $f(0)=g(0)=0$ yields
$$|2f'g+2fg'-yg'|+4|g|+|gg'|\leq c_2y,\quad 0\leq y\leq y_0,$$
with $c_1,c_2>0$.
Consequently,
$$A\leq -c_1\gamma(t) y+c_2b(t)y=\Bigl[-c_1+c_2{b(t)\over\gamma(t)}\Bigr]\gamma(t)y\leq 0$$
on $[0,y_0]$ for $t$ large enough, due to (\ref{equivgamma}) and (\ref{equivaprime}).

We have thus proved that, under conditions (\ref{assum1a}), (\ref{assum2a}), (\ref{equivgamma}) and (\ref{equivaprime}),
there holds ${\mathcal P}\underline u\leq 0$ in $(0,1)\times (T,\infty)$ for $T$ large enough.
By a time-shift we may obviously take $T=0$.

\medskip
{\it Step 2.} {\bf Determination of  $\pmb{a(t)}$ by matching at the outer boundary.}
Next, the determination of $a(t)$, will be done by ``matching" with
the boundary condition at $x=1$, i.e. by writing
$$\underline u(1,t)<1,$$ 
which is equivalent to
\be\label{gk2a}
b f(a)-b^2 g(a)<\frac{1}{a+1}.
\ee
It is of course sufficient to check (\ref{gk2a}) for large $t$ 
(thanks to the possibility of shifting time). 
Let us first sketch the resolution of (\ref{gk2a}) in a rough way.
Using (\ref{Defb}) and applying Lemmas~\ref{lem1}(i) and \ref{lem1b}(i) at leading order, we are left with
\be\label{gk2}
\frac{a'}{a^2} \left(\log a- \frac{a'}{a^2} \frac{a \log a}{2}\right)\lesssim\frac{1}{a+1},\quad\mbox{as } t\to\infty. 
\ee
We expect the second term in the bracket of the LHS of the preceding relation to be much smaller than
the first one as $a \to\infty.$ If we ignore it we obtain the differential inequality
\be\label{mk1}
a' \lesssim\frac{a}{\log a},\quad\mbox{as } t\to\infty ,
\ee
which implies that
\bgee
a(t)\lesssim e^{\sqrt{2t}},\quad\mbox{as } t\to\infty.
\egee
However, the latter estimation is not accurate enough to show the desired estimate
and we thus add a correction term in (\ref{mk1}). More precisely, we look for $a(t)$ as the solution of 
\be\label{mk2}
a'=\frac{a}{\log a}(1+\eta),\quad t>0,\qquad a(0)=2, 
\ee 
where the correction term $\eta=\eta(a)$ has the form
\be\label{gk1}
\eta=\frac{5}{2 \log a}+\frac{K}{\log^2 a},\quad K>0.
\ee

Now plugging (\ref{mk2}) into (\ref{gk2a}), 
recalling also (\ref{Defb}), and using the exact asymptotic behaviour of $f(a), g(a)$ as $a \to \infty,$ provided by Lemmas~\ref{lem1}(i) and \ref{lem1b}(i),  we are reduced to the condition
\bgee
\frac{1+\eta}{a \log a} \left[\log a-2+O\left(\frac{\log^2 a}{a}\right)-\frac{1+\eta}{a \log a}\left(\frac{a \log a}{2}-\frac{9 a}{4}+O\left(\log^3 a\right)\right)\right]<\frac{1}{a+1},\quad\mbox{as } t\to\infty
\egee
or
\be\label{dt1}
(1+\eta) \left[1-\frac{5+\eta}{2\log a}+\frac{9}{4\log^2 a}+\frac{9\eta}{4\log^2 a}+O\left(\frac{\log a}{a}\right)\right]<\frac{a}{a+1},
\quad\mbox{as } t\to\infty.
\ee
Plugging (\ref{gk1}) into (\ref{dt1}) we first obtain
\bgee
\left(1+\frac{5}{2 \log a}+\frac{K}{\log^2 a}\right) \left[1-\frac{5}{2\log a}+\frac{1}{\log^2 a}+O\left(\frac{1}{\log^3 a}\right)\right]<1-\frac{1}{a+1},\quad\mbox{as } t\to\infty
\egee
and finally
\be\label{mt1}
1+\frac{4K-21}{4\log^2 a}+O\left(\frac{1}{\log^3 a}\right)<1-\frac{1}{a+1},\quad\mbox{as } t\to\infty.
\ee
In order for (\ref{mt1}) to be satisfied, we choose $K<\frac{21}{4}$ and
we then obtain the following ODE for $a$:
\be\label{tt1}
a'={a\over \log a}\Bigl(1+{5\over 2\log a}+{K\over \log^2 a}\Bigr),\quad t>0,\qquad a(0)=2. 
\ee
Here we should mention that the same form for $\eta,$ given by (\ref{gk1}), will be also considered for the upper solution, with a different constant $K$; see next subsection.
Equation (\ref{tt1}) implies that
$$ 
a'={a\over \log a -5/2}\left(1+O\bigl(\log^{-2} a\bigr)\right),\quad\mbox{as } t\to\infty,
$$
and integrating with respect to $t$ we get
$$\log^2a-5\log a=2t+O\left(\int_0^t\frac{ds}{\log^2(a(s))}\right),\quad\mbox{as } t\to\infty.$$
Solving the quadratic polynomial in $\log a$, and noting that
$\log a(s)\geq \sqrt{2s}$ by (\ref{tt1}), we
end up with (\ref{AsymptbLemA}).
As for (\ref{AsymptbLemB}), it follows from
\be\label{dt2}
b=\frac{a'}{a^2}=\frac{1}{a \log a}\left(1+\frac{5}{2 \log a}+\frac{K}{\log^2 a}\right)
=\frac{1+O\left(\log^{-1}a\right)}{a \log a}
=\frac{1+O\left(t^{-1/2}\right)}{a \log a},\quad\mbox{as } t\to\infty,
\ee
where we used (\ref{tt1}) and $\log a\geq \sqrt{2t},\;\mbox{as}\; t\to\infty$.
On the other hand, denoting $G(s)=s+(5/2)s^2+Ks^3$ and using (\ref{tt1}),
we see that $\gamma=(a/a')'$ satisfies
\be\label{dt3}
\gamma
=\Bigl[\frac{1}{G\bigl(1/\log a\bigr)}\Bigr]'
=\frac{a'}{a\log^2 a}\,\frac{G'\bigl(1/\log a\bigr)}{G^2\bigl(1/\log a\bigr)}
=\frac{G'\bigl(1/\log a\bigr)}{\log^2 a\,G\bigl(1/\log a\bigr)}=H\bigl(1/\log a\bigr),
\ee
where $H(s)=s(1+5s+3Ks^2)(1+(5/2)s+Ks^2)^{-1}$.
Finally, the assumed properties (\ref{assum1a}), (\ref{assum2a}), 
(\ref{equivgamma}) and (\ref{equivaprime}) of $a,b,\gamma$ 
are immediate consequences of (\ref{tt1}), (\ref{dt2}) and (\ref{dt3}).

\medskip

\medskip
{\it Step 3.} {\bf Proof of {\rm {\bf(\ref{underuxpos})}}.}
By (\ref{underux}) we have
$$a^{-1}\underline u_x={1\over (1+y)^2}+bf'(y)-b^2g'(y).$$
By Lemma \ref{lem1b} and (\ref{equivaprime}), taking $y_1$ and $t_1$ large enough,
we have, for $t\geq t_1$ and $y_1\leq y\leq a(t)$,
$$a^{-1}\underline u_x\geq {1\over 2a^2}-{\log y\over a^2\log^2 a}
\geq {1\over a^2}\Bigl({1\over 2}-{1\over \log a}\Bigr)>0.$$
Now, for $0\leq y\leq y_1$ and $t\geq t_1$ possibly larger, we get
$$a^{-1}\underline u_x\geq {1\over (1+y_1)^2}-Cb(t)-Cb^2(t)>0.$$
By a time-shift we may obviously take $t_1=0$ and (\ref{underuxpos}) is proved.  \fin

\medskip
{\bf Remark 4.1.} It is still possible to obtain 
a qualitatively correct lower solution with just a two-term expansion, for instance by making
the simple choice $f(y)=\log (1+y)$, $g=0$ in (\ref{AnsatzUnderu}).
However this yields only the lower grow-up rate up to a multiplicative constant, i.e. $u_x(0,t)\geq Ce^{\sqrt{2t}}$,
and does not enable one to deduce an expansion of the form (\ref{expansionmainthm}).

\bigskip

\subsection{Construction of an upper solution} 
The form (\ref{AnsatzUnderu}) does not seem to be sufficient to construct an accurate
upper solution (i.e., leading to a function $a(t)$ fulfilling 
(\ref{AsymptbLemA})).
We need a slight perturbation, corresponding to the modified Ansatz:
\be\label{AnsatzOveruA}
\overline u(x,t)=1-{1\over y+1}+b(t)f(y)-b^2(t)\tilde g(y,t), \qquad y=a(t)x,
\ee
where 
\be\label{AnsatzOveruB}
\tilde g(y,t)=(1+\eps(t))h(y),
\ee
and $\eps(t)$ goes to $0$ as $t\to\infty$.

\begin{lem}\label{LemMain2}
The problem
\bgee
\overline u_t-x\overline u_{xx}
&\geq& 2{\hskip 1pt}\overline u{\hskip 1pt}\overline u_x,\quad 0<x<1,\ t>0,\\
\overline u(0,t)&=&0,\quad t\geq 0,\\
\overline u(1,t)&\geq&1,\quad t\geq 0,\\
\egee
admits a solution of the form
$$\overline u(x,t)=1-{1\over y+1}+b(t)f(y)-(1+\eps(t))b^2(t)h(y), \qquad y=a(t)x,$$
where the smooth functions $a(t),b(t),f(y),h(y)$ have the following properties:
$$f(y)\sim \log y,\quad h(y)\sim{y\log y\over 2},\quad\mbox{as } y\to\infty,$$
$$f(0)=h(0)=f'(0)=h'(0)=0,$$
\be\label{AsymptbLemA2}
a(t)=\Bigl(1+O\bigl(t^{-1/2}\log t\bigr)\Bigr)
\exp\left[\frac{5}{2}+\sqrt{2t}\right],
\quad\mbox{as } t\to\infty,
\ee
\be\label{AsymptbLemB2}
b(t)={1+O\bigl(t^{-1/2}\bigr)\over a(t)\log a(t)},\quad\mbox{as } t\to\infty,
\ee
\be\label{AsymptbLemC2}
\eps(t)\sim (2t)^{-1/2},\quad\mbox{as } t\to\infty.
\ee
\end{lem}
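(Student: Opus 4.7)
The plan is to closely mirror the three-step structure of the proof of Lemma~\ref{LemMain1}, with sign inequalities reversed. First, I would substitute the modified Ansatz (\ref{AnsatzOveruA})--(\ref{AnsatzOveruB}) into ${\mathcal P}\overline u$. The computation is formally identical to the one for $\underline u$ in Lemma~\ref{LemMain1}, except that $g$ is replaced by $(1+\eps(t))h$ and that $\overline u_t$ acquires an extra contribution $-b^2\eps'(t)h(y)$. Keeping the same choice of $f$ cancels the leading $ab$-bracket. Since $h = g + M{\mathcal L}_0^{-1}\varphi$ yields ${\mathcal L}h = (2ff'-yf'+f) + M\varphi$, the $ab^2$-bracket becomes
\[
(\eps - \gamma)f + \eps\bigl(2ff'-yf'\bigr) + (1+\eps)M\varphi,
\]
plus higher-order corrections in $ab^3$, $ab^4$, together with the new $-b^2\eps'(t) h$ piece.

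Next, I would choose $\eps(t)\sim(2t)^{-1/2}$ with leading coefficient slightly larger than that of $\gamma(t)\sim 1/\log a$ (consistent with $\sqrt{2t}\sim \log a - 5/2$ from (\ref{AsymptbLemA})), so that $(\eps-\gamma)f\ge 0$ in the bulk where $f\geq 0$. Taking $M>0$ large enough, the positive term $(1+\eps)M\varphi$ absorbs the small negative contribution $\eps(2ff'-yf')$ (which tends to $-\eps$ as $y\to\infty$ by Lemma~\ref{lem1}), together with the higher-order corrections, which are of lower order in $1/\log a$ by Lemma~\ref{lem1b} and by $\eps'=O(t^{-3/2})$. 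A separate, simpler sign analysis near $y=0$, where $f,h$ and their first derivatives vanish, then completes the proof that ${\mathcal P}\overline u\geq 0$ on $(0,1)\times(T,\infty)$ for some large $T$; an initial time-shift (justified by Lemma~\ref{LemInit2}) then gives the conclusion on $(0,1)\times(0,\infty)$.

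Third, the outer matching $\overline u(1,t)\ge 1$ is equivalent to $bf(a)-(1+\eps)b^2h(a)\ge 1/(a+1)$. Using the expansions of $f(a),h(a)$ from Lemmas~\ref{lem1}(i) and~\ref{lem1b}(iii) and the substitution $a'=(a/\log a)(1+\eta)$ with $\eta=5/(2\log a)+K/\log^2 a$, the derivation leading to (\ref{mt1}) produces the reversed condition, which holds provided $K>21/4$. Integrating the resulting ODE reproduces the asymptotics (\ref{AsymptbLemA2})--(\ref{AsymptbLemB2}) exactly as in Step~2 of Lemma~\ref{LemMain1}, and the $\eps(t)$ fixed in the preceding paragraph satisfies (\ref{AsymptbLemC2}). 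The boundary condition $\overline u(0,t)=0$ is automatic from $f(0)=h(0)=0$. The main obstacle will be the sign analysis of the second paragraph: $\eps$ and $\gamma$ are both of order $1/\log a$ and must be tuned simultaneously so as to make the bulk sign positive while still matching the outer boundary on the $K>21/4$ side. The extra term $M\varphi$ in $h$, absent from $g$, provides precisely the positive ``slack'' needed to close the construction on the opposite side of the lower-solution one, and its specific logarithmic decay $\varphi\sim 1/\log y$ (via Lemma~\ref{lem1b}(iii)--(iv)) is what preserves the two-term asymptotic expansion of $a(t)$.
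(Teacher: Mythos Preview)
Your overall strategy matches the paper's, but two points differ from the paper's execution and one of them is an actual error.

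First, your handling of $\eps$ versus $\gamma$. You propose taking $\eps$ with ``leading coefficient slightly larger'' than that of $\gamma$ so that $(\eps-\gamma)f\ge0$. But $\gamma\sim1/\log a\sim(2t)^{-1/2}$ already has leading coefficient $1$, so if you increase the leading coefficient you violate (\ref{AsymptbLemC2}); and if you only perturb at subleading order, the difference $(\eps-\gamma)f$ is of order $(\log y)/\log^2 a$, which is awkward to track against the other terms near $y\sim a$. The paper instead makes the exact choice $\eps=\gamma$, which kills the $(\eps-\gamma)f$ term identically. The contribution $-\eps' h/a$ is then handled not as a lower-order remainder but by the monotonicity $\gamma'\le0$ (checked directly from the explicit formula (\ref{dt3}) for $\gamma$) together with $h\ge0$ (from (\ref{PreservePos}), after choosing $M$ large enough that $2ff'-yf'+f+M\varphi\ge0$), so that $-\eps' h/a\ge0$ outright. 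This makes (\ref{AsymptbLemC2}) automatic and avoids any delicate tuning between $\eps$ and $\gamma$.

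Second, in the matching step your threshold constant is wrong. The extra factor $(1+\eps)$ multiplying $b^2h(a)$ contributes, via $\eps\sim1/\log a$, an additional $-1/(2\log^2 a)$ in the expansion of the bracket, so the analogue of (\ref{mt1}) reads
\[
1+\frac{4K-23}{4\log^2 a}+O\Bigl(\frac{1}{\log^3 a}\Bigr)\ \ge\ 1-\frac{1}{a+1},
\]
and one must take $K>23/4$, not $K>21/4$. The shift from $21$ to $23$ is precisely the fingerprint of the $(1+\eps)$ perturbation that distinguishes $\overline u$ from $\underline u$. (A minor point: the time-shift at the end of Step~1 requires no appeal to Lemma~\ref{LemInit2}; it is simply the translation $t\mapsto t+T$ applied to $\overline u$ itself.)
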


{\it Proof.} {\it Step 1:} {\bf Construction of the supersolution.}
Taking $\overline u$ as defined in (\ref{AnsatzOveruA}), (\ref{AnsatzOveruB}), the expression for ${\mathcal P}\overline u$ is similar to (\ref{PunderuNew}), except that
$g,g',g''$ are now replaced with $\tilde g, \tilde g_y, \tilde g_{yy}$ and an additional
term $-b^2\eps'h$ is added, which is inherited from $\overline u_t$.
 As in the proof of Lemma~\ref{LemMain1}, $b$ and $\gamma$ are defined through
(\ref{Defb}) and (\ref{Defgamma}), and we assume
(\ref{assum1a}), (\ref{assum2a}), (\ref{equivgamma}) and (\ref{equivaprime}).
This leads to 
\bgee
{\mathcal P}\overline u
&=&ab\Bigl[{y\over (1+y)^2}-{\mathcal L}f\Bigr]
+ab^2 \Bigl[{\mathcal L}\tilde g-2ff'+yf'-(1+\gamma)f-{\eps'\over a} h\Bigr]\\
&&\hskip 6cm
+ab^3 \Bigl[2f'\tilde g+2f\tilde g_y-y\tilde g_y+2(1+\gamma)\tilde g\Bigr]-2ab^4\tilde g\tilde g_y.
\egee
Replacing $\tilde g(y,t)$ with $(1+\eps(t))h(y)$, we obtain
\be\label{Poveru}  
\alignedat2
{\mathcal P}\overline u
&=&ab\Bigl[{y\over (1+y)^2}-{\mathcal L}f\Bigr]
+ab^2 \Bigl[(1+\eps){\mathcal L}h-2ff'+yf'-(1+\gamma)f-{\eps'\over a}h\Bigr]\\
&&\hskip 2cm
+(1+\eps)ab^3 \Bigl[2f'h+2fh'-yh'+2(1+\gamma)h\Bigr]-2(1+\eps)^2ab^4hh'.
\endaligned
\ee
Denote
$$B_0:=(1+\eps){\mathcal L}h-2ff'+yf'-(1+\gamma)f-{\eps'\over a}h.$$
As in Lemma~\ref{LemMain1}, we first choose
$$f:=(I+{\mathcal L}_0^{-1})\Bigl({y\over (1+y)^2}\Bigr).$$
Dividing identity (\ref{Poveru}) by $ab^2$, we see that ${\mathcal P}\overline u$ has the same sign as the quantity
$$B:=B_0+(1+\eps)b \Bigl[2f'h+2fh'-yh'+2(1+\gamma)h\Bigr]-2(1+\eps)^2b^2hh'.$$
Next we choose
$$h={\mathcal L}_0^{-1}\bigl(2ff'-yf'+f+M\varphi\bigr),$$
where $\varphi$ satisfies
\be\label{propphi}
\varphi(0)=0,\qquad \varphi'(0)>0,\qquad \varphi(y)>0\ \hbox{ for }0<y<2,\qquad
\varphi(y)=1/\log y\ \hbox{ for }y\geq 2.
\ee
Note that, taking $M>2$ suitably large, we have
$$2ff'-yf'+f+M\varphi\geq 0,\quad y\geq 0,$$
by Lemma~\ref{lem1}, hence
\be\label{hpos}
h(y)\geq 0,\quad y\geq 0,
\ee
due to (\ref{PreservePos}). We compute
\bgee
B_0
&=&(1+\eps)\bigl(2ff'-yf'+f+M\varphi)-2ff'+yf'-(1+\gamma)f-{\eps'\over a}h\\
&=&\eps\bigl(2ff'-yf')+M(1+\eps)\varphi+(\eps-\gamma)f-{\eps'\over a}h.\\
\egee
At this point, we choose
\be\label{choiceeps}
\eps=\gamma,
\ee
where $\gamma$ is defined by (\ref{Defgamma}), 
and we assume again (\ref{equivgamma}) and (\ref{equivaprime}), along with 
\be\label{gammadecr}
\gamma'\leq 0
\ee
(these assumptions will be verified on the final choice of the function $a$).
By (\ref{hpos}) and (\ref{gammadecr}) we have
$$B_0 \geq\gamma\bigl(2ff'-yf')+M(1+\gamma)\varphi$$
and it follows 
that 
\be\label{signB}
(1+\gamma)^{-1}B
\geq \Bigl[{\gamma\over 1+\gamma}\bigl(2ff'-yf')+M\varphi\Bigr] 
+b \Bigl[2f'h+2fh'-yh'+2(1+\gamma)h\Bigr]-2(1+\gamma)b^2hh'.
\ee

To show that the RHS of (\ref{signB}) is nonnegative for large $t$, 
we again consider separately the
regions $y_0\leq y\leq a(t)$ and $0\leq y\leq y_0$, for some large $y_0$ independent of $t$.
By the estimates in Lemma \ref{lem1}, we have
\be\label{Asymptfh}
f\sim\log y,\ \ f'\sim {1\over y}\ \ \hbox{ and }\ \ yh'\sim h\sim {y\log y\over 2}, \quad\hbox{ as } y\to\infty.
\ee
Consequently,
fixing $\delta>0$, using (\ref{equivgamma})
and taking $y_0$ and $t_0$ large enough
we have, for $y_0\leq y\leq a(t)$ and $t\geq t_0$,

$${\gamma\over 1+\gamma}\bigl(2ff'-yf')+M\varphi \geq -{3\over 2}\gamma+{M\over\log y}
\geq -{2\over \log a}+{M\over\log y}\geq {M-2\over\log y},$$
$$2f'h+2fh'-yh'+2(1+\gamma)h\geq -\Bigl({1\over 2}+\delta\Bigr)y\log y+2(1+\gamma)\Bigl({1\over 2}-\delta\Bigr)y\log y
\geq \Bigl({1\over 2}-4\delta\Bigr)y\log y$$
and 
$$2(1+\gamma)hh'\leq y\log^2 y.$$
Assuming $M\geq 3$, taking $\delta=1/8$, and also using (\ref{equivaprime}), we infer that
$$(1+\gamma)^{-1}B
\geq {1\over\log a}-{y\log^2 y\over a^2\log^2 a}
={1\over\log a}\left(1-{y\log^2 y\over a^2\log a}\right)
\geq{1\over\log a}\left(1-{\log a\over a}\right)\geq 0,
$$
for $y_0\leq y\leq a(t)$ and $t\geq t_0$.
Next, for $0\leq y\leq y_0$, (\ref{propphi}) implies 
$$M\varphi(y)\geq c_1y,$$
whereas $f(0)=h(0)=0$ yields
$$2ff'-yf'\geq -c_2y,\qquad 2f'h+2fh'-yh'+2(1+\gamma)h\geq -c_2y,
\qquad 2(1+\gamma)hh'\leq c_2y,$$
with $c_1,c_2>0$. Therefore,
$$(1+\gamma)^{-1}B\geq -\gamma(t) c_2y+c_1y-b(t)c_2y-b^2(t)c_2y=\Bigl[c_1-c_2\bigl(\gamma(t)+b(t)+b^2(t)\bigr)\Bigr]y\geq 0$$
on $[0,y_0]$ for $t$ large enough.

We have thus proved that ${\mathcal P}\overline u\geq 0$ in $(0,1)\times (T,\infty)$ for $T$ large enough. By a time-shift we may obviously take $T=0$.

\medskip

{\it Step 2.} {\bf Determination of $\pmb{a(t)}$ by matching at the outer boundary.}
The determination of $a(t)$, will be done again by ``matching" with
the boundary condition at $x=1.$ Imposing that
$$\underline u(1,t)\geq 1$$ we obtain
\bgee
b f(a)-b^2(1+\varepsilon) h(a)\geq \frac{1}{a+1},
\egee
and taking (\ref{Defb}) into account we end up with
\be\label{tk1}
\frac{a'}{a^2} \left(f(a)- \frac{a'}{a^2}(1+\varepsilon) h(a)\right)\geq\frac{1}{a+1}.
\ee
Again it suffices to check (\ref{tk1}) for large $t$.
Following the same reasoning as in the case of lower solution we again look for $a(t)$ as a solution of
\be\label{tk2}
a'=\frac{a}{\log a}(1+\eta),\quad\mbox{as } t\to\infty
\ee 
where the correction term $\eta=\eta(a)$ is given by (\ref{gk1})
with $K$ a constant to be determined. 

Plugging (\ref{tk2}) into (\ref{tk1}) and using the exact asymptotic behaviour of $f(a), h(a)$ 
as $a \to \infty,$ given by Lemmas \ref{lem1} and \ref{lem1b}, we arrive at 
$$
\frac{1+\eta}{a \log a} \left[\log a-2+O\left(\frac{\log^2 a}{a}\right)
-\frac{(1+\eta)(1+\varepsilon)}{a \log a}\left(\frac{a \log a}{2}-\frac{9 a}{4}
+O\Bigl(\frac{a}{\log a}\Bigr)\right)\right]\geq \frac{1}{a+1}
$$
or
\be\label{mf1}
(1+\eta) \left[1-\frac{2}{\log a}+O\left(\frac{\log a}{a}\right) \right. \\
-\left.\frac{(1+\eta)(1+\varepsilon)}{\log a}\left(\frac{1}{2}-\frac{9}{4\log a}+O\left(\frac{1}{\log^2 a}\right)\right)\right]\geq \frac{a}{a+1},
\ee
as $t\to\infty$.
Denote by $\Gamma$ the quantity in the bracket in the LHS of (\ref{mf1}).
Using (\ref{gk1}) and (\ref{mf1}) we obtain
\bgee
\Gamma&=&1-\frac{2}{\log a}+O\left(\frac{\log a}{a}\right) \\
&&\qquad-\left(1+\frac{5}{2 \log a}+\frac{K}{ \log^2 a} \right)\left(1+\frac{1}{\log a}+\frac{5}{2 \log^2 a}\right)\left(\frac{1}{2 \log a}-\frac{9}{4 \log^2 a}+O\left(\frac{1}{\log^3 a}\right)\right)\\
&=&1-\frac{5}{2 \log a}+\frac{1}{2 \log^2 a}+O\left(\frac{1}{\log^3 a}\right),\quad\mbox{as }
t\to\infty.
\egee
Then (\ref{mf1}) becomes equivalent to
\bgee
\left(1+\frac{5}{2 \log a}+\frac{K}{ \log^2 a} \right)\left(1-\frac{5}{2 \log a}+\frac{1}{2 \log^2 a}+O\left(\frac{1}{\log^3 a}\right)\right)\geq 1-\frac{1}{a+1},\quad\mbox{as }
t\to\infty
\egee
that is, 
\be\label{mf2}
1+\frac{4 K-23}{4\log^2 a}+O\left(\frac{1}{\log^3 a}\right)\geq 1-\frac{1}{a+1},\quad\mbox{as }
t\to\infty.
\ee

For (\ref{mf2}) to be satisfied we choose $K>\frac{23}{4}$, and
we again take $a$ to be the solution of the ODE (\ref{tt1}).
Then, by the end of Step~2 of the proof of Lemma~\ref{LemMain1}, we obtain
 (\ref{AsymptbLemA2}), (\ref{AsymptbLemB2}), 
 as well as the assumed properties (\ref{assum1a}), (\ref{assum2a}), 
(\ref{equivgamma}) and (\ref{equivaprime}) of $a,b,\gamma$.
Finally, we note that (\ref{equivgamma}), (\ref{AsymptbLemA2}) and (\ref{choiceeps})
guarantee (\ref{AsymptbLemC2}), and that (\ref{dt3}) implies
$$
\gamma'
=\frac{-a'}{a\log^2 a}H'\bigl(1/\log a\bigr)=\frac{-1}{\log^2 a}(H'G)\bigl(1/\log a\bigr)
\sim \frac{-1}{\log^3 a},
\quad \mbox{as } t\to\infty,
$$
hence (\ref{gammadecr}) (after a further time-shift).
\fin

\subsection{Proofs of Theorem~\ref{Theorem 1} and Corollary~\ref{Corollary 2}}

Let $u$ be the solution of (\ref{pbmu1})-(\ref{pbmu4}) and let $\underline u, \overline u$ be the lower/upper solutions 
provided by Lemmas~\ref{LemMain1} and \ref{LemMain2}.
The asymptotic expansion (\ref{expansionmainthm})-(\ref{amainthm}) in Theorem~\ref{Theorem 1}
will be an immediate consequence of the following two Lemmas.
The first one guarantees that $u$ lies between suitable time-shifts of $\underline u$ and $\overline u$.
The second one shows that (shifted versions of) $\underline u$ and $\overline u$ satisfy the 
required asymptotic behaviour.

\begin{lem}
\label{LemFinal1}
(i) There exists $T_1>0$ such that
\be\label{Comp2LemFinal1a}
u(\cdot,t)\geq \underline u(\cdot,t-T_1), \qquad t\geq T_1.
\ee
(ii) Let $\tau$ be as in Lemma~\ref{LemInit1}. Then there exists $T_2>0$ such that
$$u(\cdot,t)\leq \overline u(\cdot,t+T_2), \qquad t\geq \tau.$$
\end{lem}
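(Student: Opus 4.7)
\smallskip

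The plan is to derive both inequalities from the comparison principle (Proposition~\ref{PropComp}), applied to $u$ against suitable time-shifts of $\underline u$ and $\overline u$. The interior parabolic and boundary comparisons are automatic from the constructions in Lemmas~\ref{LemMain1} and~\ref{LemMain2}; the real work is in setting up a valid initial comparison.

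For part (i), I apply Proposition~\ref{PropComp} to $\underline u$ and $v(x,s):=u(x,s+T_1)$ on $(0,1)\times(0,\infty)$, so the task reduces to $\underline u(\cdot,0)\le u(\cdot,T_1)$. From the Ansatz (\ref{AnsatzUnderu}) with $a(0),b(0)$ finite, $\underline u(\cdot,0)\in C^1([0,1])$ satisfies $\underline u(0,0)=0$, $\underline u(1,0)<1$, and has bounded slope, so there exist $\delta_0\in(0,1)$ and $K_0>0$ with $\underline u(x,0)\le\min(1-\delta_0,K_0 x)$ on $[0,1]$. Picking $\delta<\min(\delta_0,1/K_0)$ ensures $\min(1-\delta,x/\delta)\ge\underline u(x,0)$, and Lemma~\ref{LemInit2} supplies $T_\delta$ with $u(x,T_\delta)\ge\min(1-\delta,x/\delta)$; set $T_1:=T_\delta$.

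For part (ii), I apply Proposition~\ref{PropComp} on $(0,1)\times(\tau,\infty)$ to $u$ and $w(x,t):=\overline u(x,t+T_2)$; the goal is to choose $T_2$ so that $u(\cdot,\tau)\le\overline u(\cdot,\tau+T_2)$ on $[0,1]$. Lemma~\ref{LemInit1} yields the envelope $u(x,\tau)\le\psi(x):=\min(2Kx,\,1-\eta(1-x))$, whose two branches meet at $x_0=(1-\eta)/(2K-\eta)<1/(2K)$ (using $K\ge 1$). Using the asymptotics of $f,h$ from Lemmas~\ref{lem1} and~\ref{lem1b} together with $b(t)\sim 1/(a(t)\log a(t))$, one checks that $bf(y)-(1+\eps)b^2 h(y)\ge 0$ for all $y\in[0,a(t)]$ and $t$ large (the ratio $f(y)/(bh(y))$ is $\gtrsim 2\log a$ for $y$ bounded away from $0$ and $\to\infty$ near $y=0$ since $h=O(y^2)$, $f(y)\sim y$), so $\overline u(x,t)\ge U_{a(t)}(x):=a(t)x/(a(t)x+1)$ uniformly on $[0,1]$.

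The comparison $\overline u(\cdot,\tau+T_2)\ge\psi$ is then verified on three zones. On $[0,y_0/a(t)]$ for a small absolute $y_0$, Taylor expansion near $x=0$ (via $f'(0)=1$, $h(0)=h'(0)=0$) gives $\overline u(x,t)\ge (3/4)a(t)x\ge 2Kx$ once $a(t)\ge 8K/3$. On $[y_0/a(t),1-\delta_1]$ for fixed small $\delta_1$, one uses $\overline u\ge U_{a(t)}$ and checks $U_{a(t)}\ge\psi$ directly for $a(t)$ large (on $[y_0/a,x_0]$, $U_a(x)-2Kx=x(a(1-2Kx)-2K)/(ax+1)\ge 0$ once $a(1-2Kx_0)\ge 2K$; on $[x_0,1-\delta_1]$, $U_a(x)\ge 1-1/(ax_0+1)\to 1$ while $\psi(x)\le 1-\eta\delta_1$). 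The main obstacle is the residual zone $[1-\delta_1,1]$, where $\psi$ itself is close to $1$: here I combine $\overline u(1,t)\ge 1$ from the matching step of Lemma~\ref{LemMain2} with the slope bound $|\overline u_x(s,t)|=O(1/a(t))$ on $[1-\delta_1,1]$ (obtained from $a/(as+1)^2=O(1/a)$, $baf'(y)=O(b)$ and $(1+\eps)b^2 ah'(y)=O(1/(a\log a))$ using Lemmas~\ref{lem1},~\ref{lem1b}), and conclude via the mean-value estimate $\overline u(x,t)\ge\overline u(1,t)-(1-x)\max|\overline u_x|\ge 1-\eta(1-x)=\psi(x)$ for $T_2$ large.
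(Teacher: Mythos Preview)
Your proof is correct and follows essentially the same approach as the paper's: part~(i) is identical (initial ordering via Lemma~\ref{LemInit2}, then comparison), and for part~(ii) both arguments reduce to $u(\cdot,\tau)\le\overline u(\cdot,t_2)$ by showing $\overline u\ge U_{a}$ near $x=0$ and controlling $\overline u_x$ near $x=1$. The paper is slightly more economical, using just two zones with split point $x_0=1/(4K)$: on $[0,x_0]$ the bound $\overline u\ge U_a\ge 2Kx$ already covers your Taylor-expansion region, and on $(x_0,1]$ the coarser slope estimate $\overline u_x\le\eta$ (using only boundedness of $f'$, giving $baf'=O(1/\log a)$) suffices, so your intermediate zone $[x_0,1-\delta_1]$ and the sharper $O(1/a)$ bound are not needed.
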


{\it Proof.} (i) Since $\underline u(\cdot,0)\in C^1([0,1])$
with $\underline u(0,0)=0$, $\underline u(1,0)<1$
and $\underline u_x(\cdot,0)>0$, 
it follows from Lemma~\ref{LemInit2} that $u(\cdot,T_1)\geq \underline u(\cdot,0)$ for some $T_1>0$. 
The assertion then follows from the comparison principle.
\smallskip

(ii) Due to (\ref{Asymptfh}), (\ref{choicef}) and $h(0)=0$, we have
$$f(y)\geq c_1\log(y+1)\quad\hbox{ and }\quad
h(y)\leq c_2\, (y+1)\log(y+1),\qquad y\geq 0,$$
for some $c_1, c_2>0$.
This along with (\ref{AsymptbLemA2})-(\ref{AsymptbLemC2}) implies that, for all $t\geq t_2$ large enough and all $x\in [0,1]$, 
$$
\aligned
\overline u(x,t)
&={ax\over ax+1}+bf(ax)-(1+\eps(t))b^2h(ax)\\
&\geq {ax\over ax+1}+b\log(ax+1)\bigl(c_1-2c_2 (ax+1)b\bigr)\\
&\geq {ax\over ax+1}+b\log(ax+1)\Bigl(c_1-3c_2 {a+1\over a\log a}\Bigr)
\geq {ax\over ax+1}.
\endaligned
$$
Take $\tau,\eta$ as in Lemma~\ref{LemInit1} and set $x_0=1/4K$. 
For all $x\in [0,x_0]$ and $t\geq t_2$, with $t_2\geq \tau$ possibly larger, we have $a/2K\geq ax+1$, hence
$$\overline u(x,t)\geq {ax\over ax+1}\geq 2Kx\geq u(x,\tau),
\qquad 0\leq x\leq x_0,\ t\geq t_2.$$
On the other hand, (\ref{Asymptfh}) implies that 
$$f'(y)\leq c_3\quad\hbox{ and }\quad |h'(y)|\leq c_3+\log(y+1),\qquad y\geq 0,$$
for some $c_3>0$.
For all $x\in (x_0,1]$ and $t\geq t_2$ (possibly larger), we thus have
$$\overline u_x={a\over (1+ax)^2}+baf'(ax)-(1+\eps(t))b^2ah'(ax)
\leq {a\over (1+ax_0)^2}+c_3ba+2(c_3+\log(a+1))b^2a\leq\eta,
$$
due to (\ref{AsymptbLemA2}) and (\ref{AsymptbLemB2}), hence
$$\overline u(x,t)\geq 1-\eta(1-x)\geq u(x,\tau),
\qquad x_0<x\leq 1,\ t\geq t_2.$$
Therefore $\overline u(\cdot,t_2)\geq u(\cdot,\tau)$ in $[0,1].$
The assertion, with $T_2=t_2-\tau$, thus follows from the comparison principle. \fin

\begin{lem}
\label{LemFinal2}
Let 
$$A(t)=\exp\left[\frac{5}{2}+\sqrt{2t}\right].$$
Then, for any $T\in\R$, each of the functions
$w=\underline u$ and $w=\overline u$ satisfies
$$1-w(x,t+T)={1\over 1+A(t)x}\left[1-x+O\bigl(t^{-1/2}\log t\bigr)\right],$$
as $t\to\infty$, uniformly in $[0,1]$.
\end{lem}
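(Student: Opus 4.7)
The plan is to rephrase the claim as
$(1+A(t)x)(1-w(x,t+T)) = 1-x + O(t^{-1/2}\log t)$, uniformly in $x\in [0,1]$. Set $\tilde a := a(t+T)$, $\tilde b := b(t+T)$, $y := \tilde a x$, and write $G(y)=g(y)$ if $w=\underline u$ or $G(y)=(1+\eps(t+T))h(y)$ if $w=\overline u$, so that $1-w = 1/(1+y) - \tilde b f(y) + \tilde b^2 G(y)$. Using $\sqrt{2(t+T)} - \sqrt{2t} = O(t^{-1/2})$ together with (\ref{AsymptbLemA})--(\ref{AsymptbLemA2}), one sees that $\sigma := A(t)/\tilde a - 1 = O(t^{-1/2}\log t)$. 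Expanding $1+Ax = (1+y) + \sigma y$ then gives
$$(1+Ax)(1-w) - (1-x) = \bigl[x-(1+y)\tilde b f(y)\bigr] + (1+y)\tilde b^2 G(y) + \sigma\Bigl[\tfrac{y}{1+y} - y\tilde b f(y) + y\tilde b^2 G(y)\Bigr],$$
so the task reduces to estimating the first bracketed quantity and showing that the remaining terms are $O(t^{-1/2}\log t)$.

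I would split into the inner region $y\le Y_0$ and the outer region $Y_0\le y\le \tilde a$, where $Y_0$ is a large fixed constant. In the inner region $x=y/\tilde a$ is exponentially small, $f, g, h$ are bounded on $[0,Y_0]$, and $\tilde b = O(1/(\tilde a\log\tilde a))$, so every term beyond the $x$ is dominated by $\tilde b$ or $\sigma$, yielding $(1+Ax)(1-w) - (1-x) = O(\tilde b)+O(\sigma)$, well within the prescribed tolerance.

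For the outer region, Lemma~\ref{lem1}(i) gives $(1+y)f(y) = y\log y + O(\log^2 y + y)$ for $y$ large; combined with the identity $\tilde b\,\tilde a\log\tilde a = 1+\eta$, where $\eta = 5/(2\log\tilde a) + O(1/\log^2\tilde a)$ comes from the ODE (\ref{tt1}), one obtains
$$(1+y)\tilde b f(y) = (1+\eta)\,\frac{y\log y}{\tilde a\log\tilde a} + O(1/\sqrt{t}).$$
Writing $\log y = \log\tilde a + \log x$ and using $|x\log x|\le 1/e$ on $[0,1]$ yields $y\log y/(\tilde a\log\tilde a) = x + O(1/\sqrt{t})$, hence $(1+y)\tilde b f(y) = x + O(1/\sqrt{t})$. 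Lemma~\ref{lem1b}(i) and (iii) show that both $G=g$ and $G=(1+\eps)h$ satisfy $G(y)=O(y\log y)$ for large $y$, so $(1+y)\tilde b^2 G(y) = O\bigl(y^2\log y/(\tilde a^2\log^2\tilde a)\bigr) = O(1/\log\tilde a) = O(1/\sqrt{t})$ for $y\le \tilde a$. Finally $y\tilde b f(y) = O(1)$ and $y\tilde b^2 G(y) = O(1/\log\tilde a)$ in the same range, so the $\sigma$-weighted terms contribute $O(t^{-1/2}\log t)$. Adding up gives the stated uniform estimate.

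The main obstacle is the delicate cancellation $(1+y)\tilde b f(y) \approx x$ in the outer region, which couples the precise asymptotic of $f$ from Lemma~\ref{lem1}(i) with the relation $\tilde b\,\tilde a\log\tilde a = 1+\eta$ imposed by the ODE (\ref{tt1}). One must track the logarithmic factors carefully to see that both $\log y/\log\tilde a - 1 = \log x/\log\tilde a$ and $\eta$ contribute only $O(1/\sqrt t)$ once multiplied by $x$, and that the second-order contribution $(1+y)\tilde b^2 G(y)$ never exceeds $O(1/\log\tilde a)$ even at the critical end $y\sim \tilde a$. Since $g$ and $(1+\eps)h$ share the same leading asymptotic $y\log y/2$, the treatment of $\underline u$ and $\overline u$ is essentially identical, and the specific value of the constant $K$ in (\ref{gk1}) plays no role here because it affects only the $O(1/\log^2\tilde a)\subset O(1/\sqrt t)$ part of $\eta$.
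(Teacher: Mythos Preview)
Your argument is correct and follows essentially the same route as the paper's proof: both rely on the asymptotics of $f$ from Lemma~\ref{lem1}(i), the bound $G(y)=O(y\log y)$ from Lemma~\ref{lem1b}, the relation $\tilde b\,\tilde a\log\tilde a=1+O(1/\log\tilde a)$, and the comparison $\tilde a=(1+O(t^{-1/2}\log t))A$. The only difference is packaging: the paper factors out $1/(1+\tilde a x)$, uses the global bound $|f(y)-\log(1+y)|\le C$ (avoiding an inner/outer split for the $f$-term), isolates the error $R(x,t)=x(\log\tilde a-\log(1+\tilde a x))/\log\tilde a$, and only at the end converts $1/(1+\tilde a x)$ to $1/(1+Ax)$; you instead multiply through by $1+Ax$ from the outset, encode the $\tilde a$ versus $A$ discrepancy in the single parameter $\sigma$, and split at a fixed $y=Y_0$ to exploit the sharper expansion $f(y)=\log y-2+O(\log^2 y/y)$. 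Both organizations lead to the same cancellation $x-(1+y)\tilde b f(y)=O(t^{-1/2})$ and the same bound on the second-order term, so neither buys anything the other does not.
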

{\it Proof.} We shall give the proof for $w=\underline u$, the other case being completely similar.
Set $\tilde a(t)=a(t+T)$ and $\tilde b(t)=b(t+T)$. 
We first note that, by (\ref{AsymptbLemA}), we have
$$\tilde a(t)=\Bigr(1+O\bigl(t^{-1/2}\log t\bigr)\Bigr)A(t)$$
and
\be\label{Esttildea}
\log{\tilde a(t)}=\log A(t)+O\bigl(t^{-1/2}\log t\bigr)=\Bigl(1+O\bigl(t^{-1}\log t\bigr)\Bigr)\log A(t).
\ee
Also, by (\ref{AsymptbLemB}), we have
\be\label{Esttildeb}
\tilde b(t)={1+O\bigl(t^{-1/2}\bigr)\over \tilde a\log{\tilde a}}.
\ee
Moreover, due to Lemma~\ref{lem1}, there exists $C>0$ such that
\be\label{fclosetolog}
|f(y)-\log(1+y)|\leq C,\quad y\geq 0.
\ee

Now, using (\ref{Esttildeb}), (\ref{fclosetolog}) and (\ref{Esttildea}), we compute
$$
\alignedat2
{1\over 1+\tilde ax}-\tilde bf(\tilde ax)
&={1\over 1+\tilde ax}\left[1-{(1+\tilde ax)f(\tilde ax)\over \tilde a\log{\tilde a}}\Bigl(1+O\bigl(t^{-1/2}\bigr)\Bigr)\right]\\
&={1\over 1+\tilde ax}\left[1-{(1+\tilde ax)\log(1+\tilde ax)\over \tilde a\log{\tilde a}}\Bigl(1+O\bigl(t^{-1/2}\bigr)\Bigr)\right]\\
&={1\over 1+\tilde ax}\left[1-{x\log(1+\tilde ax)\over \log{\tilde a}}+O\bigl(t^{-1/2}\bigr)\right]\\
&={1\over 1+\tilde ax}\left[1-x+R(x,t)+O\bigl(t^{-1/2}\bigr)\right],
\endaligned
$$
where
$$R(x,t):={x\bigl(\log{\tilde a}-\log{(1+\tilde ax)}\bigr)\over \log{\tilde a}}.$$
Here and in what follows, the $O$'s are uniform in $[0,1]$.
To control $R$ we note that, if $\tilde ax\geq 1$, then 
$$\log(\tilde ax)\leq \log(1+\tilde ax)\leq \log(\tilde ax)+\log 2,$$
hence 
$$|R(x,t)|\leq {x\bigl(|\log x|+\log 2\bigr)\over \log \tilde a}\leq {C\over \log \tilde a}$$
whereas, if $\tilde ax<1$, then $|R(x,t)|\leq x\leq 1/\tilde a$.
It follows that $\sup_{x\in [0,1]}|R(x,t)|=O(t^{-1/2})$, as $t\to\infty$.
Since, by (\ref{AsymptbLemA}), we have
$$
1+\tilde ax
=(1+Ax)\left[1+{(\tilde a-A)x\over 1+Ax}\right]
=(1+Ax)\left[1+{O\bigl(t^{-1/2}\log t\bigr)Ax\over 1+Ax}\right]
=(1+Ax)\bigl(1+O\bigl(t^{-1/2}\log t\bigr)\bigr),
$$
we deduce that
$$
{1\over 1+\tilde ax}-\tilde bf(\tilde ax)={1+O\bigl(t^{-1/2}\log t\bigr)\over 1+Ax}
\left(1-x+O\bigl(t^{-1/2}\bigr)\right),
$$
hence
\be\label{Est1LemFin2}
{1\over 1+\tilde ax}-\tilde b f(\tilde ax)={1-x+O\bigl(t^{-1/2}\log t\bigr)\over 1+Ax}.
\ee
On the other hand, due to Lemma~\ref{lem1b} and $g(0)=0$, there exists $C_1>0$ such that
$$|g(y)|\leq C_1(1+y)\log(1+y),\quad y\geq 0.$$
Consequently, using also (\ref{Esttildeb}), we obtain, for $t\to\infty$,
\be\label{Est2LemFin2}
(1+Ax){\tilde b}^2|g(\tilde a x)|
\leq 2C_1\,{(1+\tilde ax)^2\log(1+\tilde ax)\over {\tilde a}^2\log^2{\tilde a}}
\leq {3C_1\over \log{\tilde a}}=O\bigl(t^{-1/2}\bigr).
\ee
Combining (\ref{Est1LemFin2}) and (\ref{Est2LemFin2}) finally yields 
$$1-\underline u(x,t+T)={1\over 1+\tilde ax}-\tilde bf(\tilde ax)+{\tilde b}^2g(\tilde a x)
={1-x+O\bigl(t^{-1/2}\log t\bigr)\over 1+Ax}.$$
\fin

{\it Proof of (\ref{reguxThm}) and (\ref{CentralDensity}).}
First notice that estimate (\ref{Est1Init1}) in Lemma~\ref{LemInit1} and 
Lemma~\ref{LemFinal1}(ii) guarantee the control of the slope at $x=1$, namely (\ref{reguxA}),
for any finite $T>0$.
The $C^1$ regularity property (\ref{reguxThm}) is then a consequence of Lemma~\ref{LemC1}. To show 
(\ref{CentralDensity}), note that
$\underline u_x(0,t)=a(t)$ due to (\ref{underux}) and (\ref{fgLemMain1}). 
The lower estimate corresponding
to (\ref{CentralDensity}) is then a consequence of (\ref{Comp2LemFinal1a}) and (\ref{AsymptbLemA}).
The proof of the upper part is similar
by using $\overline u$.\fin

{\it Proof of Corollary~\ref{Corollary 2}.} 
Assertion (i) is an immediate consequence of (\ref{expansionmainthm}) and (\ref{amainthm}). To show (ii), 
it suffices to observe that, due to (\ref{amainthm}),
$$\int_0^1{dx\over 1+A(t)x}=\left[{\log\bigl(1+A(t)x\bigr)\over A(t)}\right]_0^1={\log(1+A(t))\over A(t)}
=\bigl(1+O\bigl(t^{-1/2}\bigr)\bigr)\sqrt{2t}\,\exp\left[-\frac{5}{2}-\sqrt{2t}\right],$$
as $t\to\infty$,
and to use 
$$\int_0^1{x\,dx\over 1+A(t)x}\leq{1\over A(t)}.$$
\fin

\medskip

{\bf Acknowledgements.} This research was performed while N.I.K. was a visitor at the Laboratoire Analyse G\'eom\'etrie et Applications in Universit\'e Paris-Nord. He is grateful to this institution for its hospitality and stimulating atmosphere. N.I.K would like to express his sincere thanks to Piotr Biler who introduced him to the chemotaxis problem and to Andrew A. Lacey for stimulating discussions. 



\end{document}